\newdimen\plusheight
\def\+{\;\lower\plusheight\hbox{$+$}\;}
\newdimen\minusheight
\def\-{\;\lower\minusheight\hbox{$-$}\;}
\newdimen\cdotsheight
\def\cds{\lower\cdotsheight\hbox{$\cdots$}}
\newcommand{\df}{\dfrac}
\newcommand{\f}{\frac}
\renewcommand{\l}{\lambda}
\newcommand{\ti}{\tilde}
\renewcommand{\(}{\left\(}
\renewcommand{\)}{\right\)}
\newcommand{\ra}{\rightarrow}
\renewcommand{\i}{\infty}
\numberwithin{equation}{section} \theoremstyle{plain}
\newtheorem{theorem}{Theorem}[section]
\newtheorem{lemma}[theorem]{Lemma}
\newtheorem{corollary}[theorem]{Corollary}
\begin{document}

\title[Applications of the Heine and Bauer-Muir transformations
 ] {Applications of the Heine and Bauer-Muir transformations
to Rogers-Ramanujan type continued fractions}

\author{ Jongsil Lee, James Mc Laughlin and Jaebum Sohn}

\address{Department of Mathematics, Yonsei University, 50 Yonsei-ro, Seoul, 03722, Korea}
\email{jsglocke@yonsei.ac.kr}
\address{Department of Mathematics, West Chester University, 25 University Avenue, West Chester, PA 19383}
\email{jmclaughlin@wcupa.edu}
\address{Department of Mathematics, Yonsei University, 50 Yonsei-ro, Seoul, 03722, Korea}
\email{jsohn@yonsei.ac.kr.}


\vspace*{0.5in}

\begin {abstract} In this paper we show that various continued fractions for the quotient of general Ramanujan functions $G(aq,b,\l q)/G(a,b,\l)$ may be derived from each other via Bauer-Muir transformations. The separate convergence of numerators and denominators play a key part in showing that the continued fractions and their Bauer-Muir transformations converge to the same limit.

We also show that these continued fractions may be derived from Heine's continued fraction for a ratio of $_2\phi_1$ functions and other continued fractions of a similar type, and by this method derive a new continued fraction for $G(aq,b,\l q)/G(a,b,\l)$.

Finally we derive a number of  new versions of some beautiful continued fraction expansions of Ramanujan for certain combinations of infinite products, with the following being an example:
\begin{multline*}
\frac{(-a,b;q)_{\infty} - (a,-b;q)_{\infty}}{(-a,b;q)_{\infty}+ (a,-b;q)_{\infty}}
= \frac{(a-b)}{1-a b}
\-
\frac{(1-a^2)(1-b^2)q}{1-a b q^2}\\
\-
\frac{(a-bq^2)(b-aq^2)q}{1-a b q^4}
\-
\frac{(1-a^2q^2)(1-b^2q^2)q^3}{1-a b q^6}
\-
\frac{(a-bq^4)(b-aq^4)q^3}{1-a b q^8}
\-
\cds
.
\end{multline*}
\end{abstract}
\maketitle
\renewcommand{\thefootnote}%
             {}
 \footnotetext{
 2010 {\it Mathematics Subject Classification}: 11A55, 33D15, 11B65
 \par
 {\it Keywords}: Heine's continued fraction, Rogers--Ramanujan continued fraction,  the Bauer--Muir transformation}

\section{introduction}
Several  known continued fractions of generalized Rogers-Ramanujan type  are known to be equal because they have been shown to converge, at least for certain values of their parameters, to the same ratio of basic hypergeometric series, $G(aq,b,\l q)/G(a,b,\l)$ (see \eqref{gableq1} below). We are justified in terming these continued fractions of \emph{generalized Rogers-Ramanujan type} since they revert back to the Rogers-Ramanujan continued fraction upon setting some of the parameters equal to zero ($a=b=0$ in \eqref{gcf1}, \eqref{gcf2}, \eqref{gcf3} and \eqref{gcf4} below).

One of the results in the present paper is to show directly that these continued fractions are equal, by showing that each one is the Bauer-Muir transformation of one of the others with respect to a certain sequence. Note that in each case it is also shown that the continued fraction and its Bauer-Muir transformation \underline{do} converge to the same limit. In general, this is not straightforward to do, but in the present case the separate convergence of numerators and denominators will make it relatively  easy to show that the continued fractions and their Bauer-Muir transformations converge to the same limit.

It is also shown that certain of these continued fractions for $G(aq,b,\l q)/G(a,b,\l)$ may be derived by specializing the parameters in known continued fraction expansions of certain ratios of $_2\phi_1$ functions (for example, Heine's continued fraction). Similar results were stated for continued fraction expansions of other ratios of $_2\phi_1$ functions in \cite{bhads}, but the proofs in that paper were incomplete, in that the continued fractions were derived by iterating certain three-term recurrences, but the authors failed to address the question of convergence, and did not show that the continued fractions converged to the initial ratio of $_2\phi_1$ functions. Since many famous continued fractions (including the Rogers-Ramanujan continued fraction, Ramanujan's cubic continued fraction, some continued fractions of Gordon) are derived from these known continued fraction expansions  for $G(aq,b,\l q)/G(a,b,\l)$, this shows that these identities ultimately derive from continued fraction expansions of  Heine type.

We also derive a new continued fraction expansion for $G(aq,b,\l q)/G(a,b,\l)$, and use it to derive some new continued fractions for some infinite products and infinite series.
Finally we derive a number of  new versions of some beautiful continued fraction expansions of Ramanujan for certain combinations of infinite products. An example of one of these latter new identities is
\begin{multline*}
\frac{(-a,b;q)_{\infty} - (a,-b;q)_{\infty}}{(-a,b;q)_{\infty}+ (a,-b;q)_{\infty}}
= \frac{(a-b)}{1-a b}
\-
\frac{(1-a^2)(1-b^2)q}{1-a b q^2}\\
\-
\frac{(a-bq^2)(b-aq^2)q}{1-a b q^4}
\-
\frac{(1-a^2q^2)(1-b^2q^2)q^3}{1-a b q^6}
\-
\frac{(a-bq^4)(b-aq^4)q^3}{1-a b q^8}
\-
\cds
.
\end{multline*}

We begin by recalling some notation. A continued fraction
\begin{align}\label{cf}
b_0 +     {\overset {\infty} {\underset{n=1} {K} }} (a_n
/b_n):=b_0+\df{a_1}{b_1}\+\df{a_2}{b_2}\+\df{a_3}{b_3}\+\cds=b_0 +
\cfrac {a_1} {b_1 + \cfrac {a_2} {b_2 + \cfrac {a_3} { b_3 +
\cdots}}},
\end{align}

\noindent where $a_n \neq 0,\ b_n \in \mathbb{C}$ can be regarded as a
composition of M\"obius transformations or linear fractional
transformations,

\begin{align}
S_1 (w)=b_0 +s_1 (w),\quad S_n (w)=S_{n-1} (s_n (w)),\quad n \geq
2,
\end{align}\label{sn}
where
$$s_n (w)=\f{a_n}{b_n +w},\quad n \geq 1.$$
The convergence behavior is well known from  M\"obius
transformations \cite{gill1,pree}.

We say that $b_0+{\overset {\infty} {\underset{n=1} {K} }}(a_n /b_n)$ converges to a value $f \in  {\hat
{\mathbb{C}}}(=\mathbb{C}\cup\infty)$ if its sequence of approximants
$$f_n = b_0 +\df{a_1}{b_1}\+\df{a_2}{b_2}\+\cds\+\df{a_n}{b_n}$$
converges to a limit $f\in  {\hat {\mathbb{C}}} $ as $n \ra \i$.

If ${\overset {\infty} {\underset{n=1} {K} }}(a_n /b_n )$ converges, so do all of its tails

\begin{align}\label{tail}
f^{(n)}={\overset {\infty} {\underset{\nu=n+1} {K} }}\df{a_{\nu}}{b_{\nu}}=\df{a_{n+1}}{b_{n+1}}\+\df{a_{n+2}}{b_{n+2}}\+
\df{a_{n+3}}{b_{n+3}}\+\cds, \quad n \geq 0,
\end{align}

\noindent and we have $f^{(0)}=f$ and we say $\{f^{(n)}\}$ is the
sequence of {\em right tails} \cite{waad2} of ${\overset {\infty} {\underset{n=1} {K} }}(a_n /b_n )$.

If $g_n = f^{(n)}, $ then from \eqref{tail},

\begin{align}\label{tail2}
g_n=\frac{a_{n+1}}{b_{n+1}+g_{n+1}},\quad n \geq 0.
\end{align}

\noindent A sequence $\{g^{(n)}\}$ which satisfies \eqref{tail2}
is said to be a sequence of {\em wrong tails} \cite{waad2} if
$g^{(0)} \neq f^{(0)}$.

We call
\begin{align*}
S_n (w_n ) =b_0 + \df{a_1}{b_1}\+ \df{a_2}{b_2} \+\cds \+
\df{a_n}{b_n +w_n}
\end{align*}

\noindent the $n$th {\em modified approximant} and $w_n $  a {\em
modifying
factor} \cite{gill3}.

On page 41 in Ramanujan's lost notebook \cite{lnb} we find the
following continued fraction. For any complex numbers $a,\ b,\
\l$, and $q$, but with $|q|<1$, define
\begin{align}\label{gableq1}
G(a,b,\l):=G(a,\l;b;q):=\sum_{n \geq 0} \f{q^{(n^2 +n)
/2}(a+\l)\cdots(a+\l q^{n-1})}{(1-q)\cdots(1-q^n
)(1+bq)\cdots(1+bq^n )}.
\end{align}

\noindent Then
\begin{equation}
\f{G(aq,b,\l q)}{G(a,b,\l)}= \f{1}{1}\+ \df{aq+\l q}{1}\+
\df{bq+\l q^2}{1}\+\df{aq^2 +\l q^3 }{1}\+ \df{bq^2 +\l q^4}{1}\+
\cds \label{gcf1}.
\end{equation}

In particular, if we first replace  $q$ by $q^2$ and $a$ by
$q^{-1}$ and then set $ b=1,\ \l=0$ in \eqref{gcf1}, we obtain the
Rogers-Ramanujan continued fraction. Furthermore, setting $b=\l=1$
and $b=0,\ \l=1$ in \eqref{gcf1} give the Ramanujan's cubic and
the G\"ollnitz--Gordon continued fraction, respectively. In this
paper, we mostly investigate these three continued fractions.

On page 43 in his lost notebook \cite{lnb} we find another continued
fraction for quotients of the function $G(a,\l;b;q)$:

\begin{equation}
\f{G(aq,b,\l q)}{G(a,b,\l)}= \f{1}{1}\+\df{aq+\l
q}{1-aq+bq}\+\df{aq+\l q^2}{1-aq+bq^2}\+\df{aq+\l
q^3}{1-aq+bq^3}\+\cds \label{gcf2}.
\end{equation}

Note that, by comparison with the continued fraction in Theorem 2.2 of \cite{BMcLW}, it is necessary to have $|aq|<1$ for
\eqref{gcf2} to hold. Bhargava and Adiga \cite{bhadi} have proved not only
this continued fraction but also the following continued fraction (also stated by Ramanujan on page 43 in his lost notebook \cite{lnb}):

\begin{equation}
\f{G(aq,b,\l q)}{G(a,b,\l)}= \f{1}{1+aq}\+\df{\l q -abq^2
}{1+bq+aq^2 } \+ \df{\l q^2 -abq^4 }{1+bq^2 +aq^3 } \+ \df{\l q^3
-abq^6 }{1+bq^3 +aq^4 } \+\cds
 \label{gcf3}.
\end{equation}

\noindent In 1974, M. Hirschhorn \cite{hirsh1} proved  the
following identity involving the continued fraction in
\eqref{gcf1}. Let $H(a,b,c,x):= \sum_{r=0}^{\i} \f{x^{(r^2
-r)/2}(b+cx)\cdots(b+cx^r )}{(x)_r (a)_{r+1}}$ and $|a|<1$, then

\begin{align*}
1+a+b+\df{cx-a}{1+a+bx}\+\df{cx^2 -a}{1+a+bx^2}\+\cds
=\f{H(a,b,c,x)}{H(a,bx, cx, x)}.
\end{align*}

Now note that
\begin{align*}
\f{G(a,\l;b;q)}{G(aq,\l q;b;q)}
&=\f{\left(\f{1}{1+b}\right)\sum_{n \geq 0} \f{q^{(n^2 +n)
/2}(a+\l)\cdots(a+\l q^{n-1})}{(1-q)\cdots(1-q^n
)(1+bq)\cdots(1+bq^n )}}{\left(\f{1}{1+b}\right)\sum_{n \geq 0}
\f{q^{(n^2 +3n) /2}(a+\l)\cdots(a+\l q^{n-1})}{(1-q)\cdots(1-q^n
)(1+bq)\cdots(1+bq^n )}}\\
&= \f{H(-b,aq,\l ,q)}{H(-b,aq^2 ,\l q,q)}.
\end{align*}

From these we can easily obtain the following identity (which thus holds for $|b|<1$):

\begin{equation*}
1+ \df{aq+\l q}{1}\+ \df{bq+\l q^2}{1}\+\df{aq^2 +\l q^3 }{1}\+
\df{bq^2 +\l q^4}{1}\+ \cds
\end{equation*}

\begin{equation}
  = 1-b+aq +\df{\l
q+b}{1-b+aq^2}\+\df{\l q^2 +b}{1-b+aq^3}\+\df{\l q^3
+b}{1-b+aq^4}\+\cds, \label{gcf4}
\end{equation}

\noindent and we examine this case again in the following section.

\section{Heine's continued fraction and Rogers--Ramanujan type continued fractions}

The $q$--analog of Gauss's continued fraction \cite{gauss} is
called {\em Heine's continued fraction} \cite{heine} and is given
by the quotient of two basic hypergeometric series
{\allowdisplaybreaks
\begin{align}\label{hcf}
\f{ _2 \phi_1 (a,b;c;q;z)}{ _2 \phi_1 (a,bq;cq;q;z)}
&=1+\f{(1-a)(c-b)z}{(1-c)(1-cq)}\+\df{(1-c)(1-bq)(cq-a)z}{1-cq^2}\nonumber\\
&\+ \df{(1-aq)(cq-b)zq}{1-cq^3}\+\df{(1-bq^2 )(cq^2
-a)zq}{1-cq^4}\+\cds \\
&=: 1+ {\overset {\infty} {\underset{n=1} {K} }} (\f{a_n z}{1}),
\end{align}}
where
\begin{eqnarray} \label{hcfa1}
a_{2n+1}=-\f{ q^{n}(1-aq^n )(b-cq^n )}{(1-cq^{2n})(1-cq^{2n+1})},
\quad n \geq 0,
\end{eqnarray}
\begin{eqnarray} \label{hcfa2}
a_{2n}=-\f{ q^{n-1}(1-bq^n )(a-cq^n )}{(1-cq^{2n-1})(1-cq^{2n})},
\quad n \geq 1
\end{eqnarray}
\noindent and basic hypergeometric series $ _2 \phi_1 (a,b;c;q;z)
$ is defined by
$$ _2 \phi_1 (a,b;c;q;z)= \sum_{n=0}^\infty \f{(a;q)_n
(b;q)_n}{(c;q)_n (q;q)_n} z^n. $$

The observation that Ramanujan's identity \eqref{gcf1}  follows from Heine's continued fraction \eqref{hcf} has apparently
not being noticed before. This new proof of \eqref{gcf1}  is simpler and more direct than the proofs of Andrews \cite{A79}, Hirschhorn \cite{H80} and Adiga and Bhargava \cite{bhadi}. In \cite{bhads} the authors give a similar proof that used a continued fraction expansion for a different ratio of $_2\phi_1$ functions, and likewise derived the other known continued fraction  representations of $G(a,b,\l)/G(aq,b,\l q)$ due to Ramanujan and Hirschhorn mentioned above. However, the authors in \cite{bhads} derived their continued fractions by formally iterating the corresponding three-term recurrences, and did not prove convergence to the initial quotient of $_2\phi_1$ functions, nor considered whether any restrictions on the values of the parameters were necessary for the identities to hold. For example, their derivation of \eqref{gcf2} is missing the requirement that $|aq|<1$ is needed for the identity to hold.

\begin{corollary}\label{cor2.1cf}
If $|q|<1$, then
\begin{equation}\label{cor1cf}
\f{G(a,b,\l)}{G(aq,b,\l q)}= 1+ \df{aq+\l q}{1}\+
\df{bq+\l q^2}{1}\+\df{aq^2 +\l q^3 }{1}\+ \df{bq^2 +\l q^4}{1}\+
\cds.
\end{equation}
\end{corollary}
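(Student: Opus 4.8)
The plan is to read \eqref{cor1cf} as the reciprocal of \eqref{gcf1} and to produce it by specializing Heine's continued fraction \eqref{hcf}. The decisive choice is to send Heine's third parameter to $0$: writing the parameters on the left of \eqref{hcf} as Heine's $a,b,c,z$ and those of $G$ as $a,b,\l$, I would set $a=-\l/a$, $b=-\l/b$, $c=0$ and $z=abq/\l$. With $c=0$ every factor $1-cq^{k}$ in \eqref{hcfa1} and \eqref{hcfa2} becomes $1$, and using $(-\l/b)z=-aq$ and $(-\l/a)z=-bq$ one computes directly that $a_{2n+1}z=-q^{n}\bigl(1-(-\l/a)q^{n}\bigr)(-\l/b)z=aq^{n+1}+\l q^{2n+1}$ and $a_{2n}z=-q^{n-1}\bigl(1-(-\l/b)q^{n}\bigr)(-\l/a)z=bq^{n}+\l q^{2n}$. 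These are exactly the partial numerators $aq+\l q,\ bq+\l q^{2},\ aq^{2}+\l q^{3},\dots$ of \eqref{cor1cf}, and all partial denominators equal $1$, so the continued fraction in \eqref{hcf} is literally the right-hand side of \eqref{cor1cf}. This step is a routine verification.

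Heine's identity then gives
\begin{equation*}
\frac{{}_2\phi_1(-\l/a,-\l/b;0;q;abq/\l)}{{}_2\phi_1(-\l/a,-\l q/b;0;q;abq/\l)}
=1+\df{aq+\l q}{1}\+\df{bq+\l q^{2}}{1}\+\df{aq^{2}+\l q^{3}}{1}\+\cds,
\end{equation*}
so everything reduces to identifying the ratio on the left with $G(a,b,\l)/G(aq,b,\l q)$. Here I would first recast $G$ as a confluent series: factoring $a+\l q^{j}=a(1+(\l/a)q^{j})$ in \eqref{gableq1} and using $q^{(n^2+n)/2}=q^{\binom{n}{2}}q^{n}$ shows that $G(a,b,\l)={}_1\phi_1(-\l/a;-bq;q;-aq)$ and, since the substitution $a\mapsto aq,\ \l\mapsto\l q$ fixes $-\l/a$ and $-bq$ while sending $-aq$ to $-aq^{2}$, that $G(aq,b,\l q)={}_1\phi_1(-\l/a;-bq;q;-aq^{2})$. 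Thus the target ratio is $\psi(x)/\psi(xq)$ with $\psi(x)={}_1\phi_1(-\l/a;-bq;q;x)$ and $x=-aq$, and the task is to match this confluent quotient with the $c=0$ quotient above.

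The main obstacle is exactly this last identification. The two ${}_2\phi_1$'s produced by Heine are not termwise equal to the $G$-series: the $c=0$ series carry no Gaussian factor $q^{\binom{n}{2}}$, whereas $G$ does, so only the ratios agree and the equality must be argued structurally rather than by comparing coefficients (indeed for $a=b=\l=1$ the $c=0$ numerator begins $1+4q+8q^{2}+\cdots$ while $G(1,1,1)$ begins $1+2q+\cdots$). I would attempt this either by applying to numerator and denominator a Heine transformation that remains valid as $c\to0$, which extracts $q$-shifted factorial prefactors and replaces the upper zero parameter by a $(-bq;q)_{n}$ denominator of the kind appearing in $G$, or by checking that $\psi(x)/\psi(xq)$ and the $c=0$ ratio satisfy the same two-step functional relation (the one encoded by the alternating $a$- and $b$-type numerators of \eqref{cor1cf}) together with the common normalization $1$ at $q=0$. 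A genuine secondary point is convergence and the range of validity: with $c=0$ the value $z=abq/\l$ need not lie where Heine's theorem guarantees that the continued fraction converges to the stated ratio, so I would establish the identity first for parameters in that region and then extend it to all $|q|<1$ by analytic continuation, using the separate convergence of the numerators and denominators of the continued fraction (the mechanism stressed throughout the paper) to control the passage to the limit.
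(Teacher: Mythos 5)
Your proposal follows essentially the same route as the paper: the identical substitution $a\mapsto-\l/a$, $b\mapsto-\l/b$, $c=0$, $z\mapsto qab/\l$ in Heine's continued fraction \eqref{hcf}, followed by a transformation of the resulting $c=0$ ratio of ${}_2\phi_1$'s into $G(a,b,\l)/G(aq,b,\l q)$. The ``Heine-type transformation that extracts $q$-shifted factorial prefactors and introduces the $(-bq;q)_k$ denominators together with the Gaussian factor $q^{k(k-1)/2}$'' that you correctly identify as the missing ingredient is precisely Jackson's formula \eqref{hjeq}, which is what the paper applies to numerator and denominator (the prefactor $(-bq;q)_{\infty}/(qab/\l;q)_{\infty}$ being common to both and cancelling in the ratio).
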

\begin{proof}
In \eqref{hcf}, set $c=0$ and simultaneously replace $a$ with $-\lambda/a$, $b$ with $-\lambda/b$ and $z$ with $qab/\lambda$, so that the continued fraction in \eqref{hcf} becomes the reciprocal of the continued fraction at  \eqref{gcf1}. The left side of \eqref{hcf} becomes
$$
\f{ _2 \phi_1 (-\lambda/a,-\lambda/b;0;q;qab/\lambda)}{ _2 \phi_1 (-\lambda/a,-\lambda q/b;0;q;qab/\lambda)},
$$
and the reciprocal of the left side of \eqref{gcf1} is obtained after applying Jackson's \cite{J10} transformation formula (see also the second formula at \cite[p.\ 14, Eq.\ (1.5.4)]{gasper})
\begin{equation}\label{hjeq}
\sum_{n=0}^{\infty}\frac{(a,b;q)_n}{(c,q;q)_n}z^n =
\frac{(az;q)_{\infty}}{(z;q)_{\infty}}
\sum_{k=0}^{\infty}\frac{(a,c/b;q)_k}{(c,az,q;q)_k}(-bz)^k q^{k(k-1)/2}
\end{equation}
to each of the $_2\phi_1$ functions.
\end{proof}

Thus all the identities which follow from Ramanujan's identity \eqref{gcf1}, such as the Rogers--Ramanujan continued fraction (see also, for example, section 6.2 in \cite{ABRLNI}) thus may be seen to follow from Heine's continued fraction \eqref{hcf}.

The continued fraction in the following theorem, which involves the same ratio of $_2\phi_1$ functions as Heine's continued fraction, is proved by
using the Heine transformation twice, and appears to be new.

\begin{theorem} \label{th2.4} For $|q|,|z|,|c/b| <1$ we have
\begin{align}
\f{_2 \phi_1 (a,b;c;q;z)}{ _2 \phi_1 (a,bq;cq;q;z)}
&=\f{1-bz}{1-c}+\f{(c-abz)(z-1)}{(1-c)(1-bzq)}\+
\df{(1-c)(1-bq)(cq-a)z}{1-bzq^2}\+\nonumber\\
&\quad \quad \df{(c-abzq)(zq-1)q}{1-bzq^3}\+\df{(1-bq^2 )(cq^2
-a)zq}{1-bzq^4}\+\cds. \label{hcf2}
\end{align}
\end{theorem}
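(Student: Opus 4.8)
The plan is to transform the quotient of $_2\phi_1$ functions on the left into a quotient to which Heine's continued fraction \eqref{hcf} applies directly, and then to recover the displayed shape by absorbing a constant factor and performing one equivalence transformation. I read ``the Heine transformation twice'' as applying the transformation
$$
{}_2\phi_1(a,b;c;q,z)=\frac{(c/b,\,bz;q)_\infty}{(c,\,z;q)_\infty}\,{}_2\phi_1\!\left(\frac{abz}{c},\,b;\,bz;\,q,\,\frac{c}{b}\right)
$$
once to the numerator $_2\phi_1(a,b;c;q;z)$ and once to the denominator $_2\phi_1(a,bq;cq;q;z)$. First I would carry out these two substitutions and simplify the quotient of the two infinite-product prefactors: the factors $(c/b;q)_\infty$ and $(z;q)_\infty$ cancel, and using $(bz;q)_\infty/(bzq;q)_\infty=1-bz$ together with $(cq;q)_\infty/(c;q)_\infty=1/(1-c)$ the prefactor collapses to exactly $\dfrac{1-bz}{1-c}$, the leading term of \eqref{hcf2}. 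What survives is
$$
\frac{1-bz}{1-c}\cdot\frac{{}_2\phi_1(abz/c,\,b;\,bz;\,q,\,c/b)}{{}_2\phi_1(abz/c,\,bq;\,bzq;\,q,\,c/b)}.
$$

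The second step is to recognise the surviving ratio as Heine's ratio ${}_2\phi_1(A,B;C;q;Z)/{}_2\phi_1(A,Bq;Cq;q;Z)$ with $A=abz/c$, $B=b$, $C=bz$ and $Z=c/b$; note that the transformed denominator ${}_2\phi_1(abz/c,bq;bzq;q,c/b)$ is precisely ${}_2\phi_1(A,Bq;Cq;q;Z)$. I would then invoke \eqref{hcf} in its normalised form $1+\Kn(a_nZ/1)$, with the $a_n$ of \eqref{hcfa1}--\eqref{hcfa2} evaluated at these parameters, and record the first couple of entries to fix the pattern, namely $a_1Z=(c-abz)(z-1)/[(1-bz)(1-bzq)]$ and $a_2Z=(1-bq)(cq-a)z/[(1-bzq)(1-bzq^2)]$, and similarly for $a_3Z,a_4Z$.

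The third step is formal. Writing $K=\dfrac{1-bz}{1-c}$, I would first absorb the constant by noting that $K\bigl(1+\Kn(a_nZ/1)\bigr)$ is the continued fraction with constant term $K$, first partial numerator $Ka_1Z$, and remaining partial numerators the unchanged $a_nZ$ $(n\ge2)$ over denominators $1$. I would then apply an equivalence transformation with multipliers $r_1=(1-c)(1-bzq)$ and $r_n=1-bzq^n$ for $n\ge2$. The delicate bookkeeping, and the one place where a careless computation goes astray, is that the factor $(1-c)$ lodged inside $r_1$ must reappear in the \emph{second} partial numerator through the product $r_1r_2$: this is exactly what produces $(1-c)(1-bq)(cq-a)z$ over $1-bzq^2$ rather than the $(1-bz)(1-bq)(cq-a)z$ one would wrongly obtain by pushing $K$ only into the constant term and the first numerator. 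Verifying that $r_1\,Ka_1Z$ and the successive $r_{n-1}r_na_nZ$ reduce to the numerators of \eqref{hcf2}, and that each $r_n$ reproduces the stated partial denominators, finishes the identification.

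Finally I would settle convergence, which is the point the paper is at pains to make. The hypotheses $|z|<1$ and $|c/b|<1$ are exactly what guarantee that both sides of the Heine transformation converge, so the transformation is legitimate; and $|c/b|<1$ is also the condition under which Heine's continued fraction \eqref{hcf}, now in the variable $Z=c/b$, converges to the quotient it represents. Since an equivalence transformation with nonzero multipliers preserves both the value and the convergence of a continued fraction, \eqref{hcf2} converges to the asserted quotient under $|q|,|z|,|c/b|<1$. The main obstacle is therefore not a hard analytic estimate but the two structural choices --- selecting the Heine transformation so that the transformed denominator matches ${}_2\phi_1(A,Bq;Cq;q;Z)$, and choosing the equivalence multipliers so that the factor $(1-c)$ migrates into the correct partial numerator --- after which convergence is inherited from \eqref{hcf}.
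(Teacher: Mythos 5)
Your proposal is correct and follows essentially the same route as the paper: apply the second iterate of Heine's transformation to the numerator and to the denominator (that is the paper's ``twice''), reduce the product prefactors to $(1-bz)/(1-c)$, recognize the surviving quotient as ${}_2\phi_1(abz/c,b;bz;q;c/b)/{}_2\phi_1(abz/c,bq;bzq;q;c/b)$ so that \eqref{hcf} applies, and finish with an equivalence transformation. Your explicit bookkeeping of the multipliers $r_1=(1-c)(1-bzq)$, $r_n=1-bzq^n$ correctly reproduces the placement of the factor $(1-c)$ in the second partial numerator, which the paper handles implicitly by displaying an ``equivalent'' continued fraction.
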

\begin{proof}

If we use the second iterate of Heine transformation \cite
[p.~10]{gasper} twice, we obtain
\begin{align}
\f{_2 \phi_1 ( a,b;c;q,z) } {_2 \phi_1 ( a,bq;cq;q,z)}
&=\f{1-bz}{1-c}\cdot \f {_2 \phi_1 (abz/c,b;bz;q,c/b )} {_2 \phi
_1 (abz/c,bq ;bqz;q, c/b )}. \label{eqnstar}
\end{align}
Using \eqref{eqnstar} and \eqref{hcf}, we obtain a continued
fraction equivalent to \eqref{hcf2}
\begin{align*}
(1-c)&\f{_2 \phi_1 ( a,b;c;q,z) } {_2 \phi_1 ( a,bq;cq;q,z)} =
(1-bz)\f {_2 \phi_1 (abz/c,b;bz;q,c/b )} {_2 \phi _1 (abz/c,bq
;bqz;q, c/b )}\\
&=1-bz+\f{(c-abz)(z-1)}{1-bzq}\+\df{(1-bq)(cqz-az)}{1-bzq^2}\+\\
&\qquad \df{(c-abzq)(zq-1)q}{1-bzq^3}\+\df{(1-bq^2 )(cq^2
z-az)q}{1-bzq^4}\+\cds.
\end{align*}
\end{proof}


One implication of this continued fraction is a new continued fraction expansion for the quotient $G(a,b,\l)/G(aq,b,\l q)$.
\begin{corollary}
If $|q|<1$, then
\begin{multline}\label{cor2cf}
\f{G(a,b,\l)}{G(aq,b,\l q)}= 1+aq+
\df{\l q-a b q^2}{1+aq^2}
\+
\df{bq+\l q^2}{1+aq^3}
\+
\df{\l q^3 - a b q^5 }{1+aq^4}
\+
\df{bq^2 +\l q^4}{1+aq^5}\\
\+
\df{\l q^5 - a b q^8 }{1+aq^6}
\+
\df{bq^3+\l q^6}{1+aq^7}
\+
\cds.
\end{multline}
\end{corollary}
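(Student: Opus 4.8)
The plan is to specialize Theorem~\ref{th2.4} in exactly the manner that Heine's continued fraction \eqref{hcf} was specialized to obtain Corollary~\ref{cor2.1cf}. Concretely, in \eqref{hcf2} I would set $c=0$ and simultaneously replace $a$ by $-\l/a$, $b$ by $-\l/b$ and $z$ by $qab/\l$. The crucial observation is that the left-hand side of \eqref{hcf2} is \emph{identical} to the left-hand side of \eqref{hcf}, namely ${_2\phi_1}(a,b;c;q;z)/{_2\phi_1}(a,bq;cq;q;z)$. Hence this substitution reproduces verbatim the left-hand side computation already carried out in the proof of Corollary~\ref{cor2.1cf}: after applying Jackson's transformation \eqref{hjeq} to each $_2\phi_1$, the left-hand side becomes $G(a,b,\l)/G(aq,b,\l q)$, with no additional work required.

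It then remains to simplify the right-hand side of \eqref{hcf2} under the same substitution. Putting $c=0$ annihilates every equivalence factor $(1-c)$ appearing in \eqref{hcf2}, so the continued fraction reduces to the cleaner form exhibited in the proof of Theorem~\ref{th2.4}, whose partial quotients are
$$b_0 = 1-bz,\quad b_n = 1-bzq^n,\quad a_{2n+1}=q^n(c-abzq^n)(zq^n-1),\quad a_{2n}=q^{n-1}(1-bq^n)(cq^n-a)z.$$
Since the substitution sends $bz$ to $-aq$ and $abz$ to $\l q$, setting $c=0$ and simplifying gives
$$b_n = 1+aq^{n+1},\qquad a_{2n}=bq^n+\l q^{2n},\qquad a_{2n+1}=\l q^{2n+1}-abq^{3n+2},$$
which are precisely the partial quotients displayed in \eqref{cor2cf}. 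This verification is entirely routine, resting only on the elementary identities $(-\l/a)(-\l/b)(qab/\l)=\l q$ and $(-\l/b)(qab/\l)=-aq$.

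The one point genuinely requiring care is convergence. Theorem~\ref{th2.4} is stated under the hypothesis $|z|<1$, that is $|qab/\l|<1$, whereas the corollary asserts the identity for \emph{all} $|q|<1$, irrespective of the size of $|qab/\l|$. I would bridge this gap by the method emphasized in the introduction, namely the separate convergence of numerators and denominators: writing $A_n$ and $B_n$ for the canonical numerators and denominators of the convergents of \eqref{cor2cf}, one shows directly from their three-term recurrence that $A_n$ and $B_n$ converge, for every $|q|<1$, to constant multiples of $G(a,b,\l)$ and $G(aq,b,\l q)$ respectively, whence $A_n/B_n$ tends to $G(a,b,\l)/G(aq,b,\l q)$ even outside the region $|qab/\l|<1$ dictated by Theorem~\ref{th2.4}. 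Establishing this separate convergence, rather than the purely algebraic manipulations above, is the real substance of the argument and the step I expect to be the main obstacle.
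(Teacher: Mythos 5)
Your proposal is correct and follows essentially the same route as the paper: the paper's proof of \eqref{cor2cf} consists precisely of setting $c=0$ and replacing $a$ by $-\lambda/a$, $b$ by $-\lambda/b$, $z$ by $qab/\lambda$ in \eqref{hcf2}, with Jackson's transformation \eqref{hjeq} handling the left side exactly as in Corollary~\ref{cor2.1cf}; your verification of the resulting partial quotients is accurate. The convergence issue you raise (the hypothesis $|z|<1$ of Theorem~\ref{th2.4} versus the bare assumption $|q|<1$ in the corollary) is a legitimate observation that the paper's own one-line proof does not address, so your proposed appeal to separate convergence of numerators and denominators goes beyond, rather than falls short of, what the paper supplies.
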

\begin{proof}
Make the same substitutions as in Corollary \ref{cor2.1cf}, namely, set $c=0$ and simultaneously replace $a$ with $-\lambda/a$, $b$ with $-\lambda/b$ and $z$ with $qab/\lambda$ in \eqref{hcf2}. The right side of \eqref{hcf2} becomes the right side of \eqref{cor2cf}, while Jackson's identity \eqref{hjeq} once again gives that the left side of \eqref{hcf2} is equal to  $G(a,b,\l)/G(aq,b,\l q)$.
\end{proof}

To apply \eqref{cor2cf} to derive specific new continued fraction identities,  we consider some existing identities in the literature involving the ratio $G(a,b,\lambda)/G(a q,b,\lambda q)​$ where all of the parameters $a$, $b$ and $\lambda$ are non-zero (as the continued fraction in  \eqref{cor2cf} mostly reverts back to known continued fractions otherwise). We gave alternative continued fraction expansions for some functions considered by Ramanujan.

The quantities on the left sides of  \eqref{mod6coreq}, \eqref{ser3cor2eq} and \eqref{ser3cor2eq2} below each appear in other continued fraction identities due to Ramanujan (see, respectively, \cite[p.\ 154, Corollary 6.2.7]{ABRLNI}, \cite[p.\ 155, Corollary 6.2.9]{ABRLNI} and \cite[p.\ 156, Corollary 6.2.11]{ABRLNI}).

\begin{corollary}\label{mod6cor}
If $|q|<1$, then
\begin{multline}\label{mod6coreq}
\frac{(q^3,q^3;q^6)_{\infty}}{(q,q^5;q^6)_{\infty}}
=
1+q
+
\frac{q^2-q^3}{1+q^3}
\+
\frac{q^2+q^4}{1+q^5}
\+
\frac{q^6-q^9}{1+q^7}
\+
\frac{q^4+q^8}{1+q^9}\\
\+
\frac{q^{10}-q^{15}}{1+q^{11}}
\+
\frac{q^6+q^{12}}{1+q^{13}}
\+
\cds.
\end{multline}
\end{corollary}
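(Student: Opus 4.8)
The plan is to read off \eqref{mod6coreq} as a single specialization of the new continued fraction \eqref{cor2cf}, namely the substitution $q \mapsto q^2$ together with $a = 1/q$, $b = 1$, $\lambda = 1$. Under this choice I expect both sides of \eqref{cor2cf} to collapse onto the two sides of \eqref{mod6coreq}, so the proof splits into checking the right-hand side by bookkeeping and the left-hand side by invoking a known product evaluation.

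For the right-hand side, I would match \eqref{cor2cf} term by term after the substitution. The integer part $1 + aq$ becomes $1 + aq^2 = 1 + q$, and the partial denominators $1 + aq^{n+1}$ become $1 + q^{2n+1}$, i.e. $1+q^3, 1+q^5, 1+q^7,\dots$. The partial numerators split into two families: the odd-indexed numerators $\lambda q^{2k-1} - ab\,q^{3k-1}$ become $q^{4k-2} - q^{6k-3}$, producing $q^2-q^3,\ q^6-q^9,\ q^{10}-q^{15},\dots$, while the even-indexed numerators $b q^{k} + \lambda q^{2k}$ become $q^{2k}+q^{4k}$, producing $q^2+q^4,\ q^4+q^8,\ q^6+q^{12},\dots$. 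These are exactly the entries displayed in \eqref{mod6coreq}, so this step is purely routine.

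For the left-hand side, recall that $G(a,b,\lambda) = G(a,\lambda;b;q)$, so after $q\mapsto q^2$ and the above values the ratio $G(a,b,\lambda)/G(aq,b,\lambda q)$ appearing in \eqref{cor2cf} becomes $G(1/q,1;1;q^2)/G(q,q^2;1;q^2)$. To finish I would use the fact, recorded in the existing literature, that this $G$-ratio equals $(q^3,q^3;q^6)_{\infty}/(q,q^5;q^6)_{\infty}$: Ramanujan's identity at \cite[p.~154, Corollary 6.2.7]{ABRLNI}, combined with Corollary \ref{cor2.1cf} (which expresses this same $G$-ratio as a Rogers--Ramanujan type continued fraction under the identical specialization), identifies the product with the ratio. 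Equating the two continued fraction expansions of this common $G$-ratio then yields \eqref{mod6coreq}.

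The main obstacle I anticipate is not the algebra but the legitimacy of the specialization $a = 1/q$, which has $|a| > 1$. I would need to confirm that for $|q|<1$ the defining series of $G(1/q,1;1;q^2)$ and $G(q,q^2;1;q^2)$ still converge, and that \eqref{cor2cf}, proved under the blanket hypothesis $|q|<1$, genuinely persists at this parameter value -- in particular that the resulting continued fraction converges to the intended $G$-ratio rather than to a wrong-tail limit. Once this convergence point is settled, and granting the cited product evaluation of the $G$-ratio, the identity \eqref{mod6coreq} follows at once.
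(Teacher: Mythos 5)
Your proposal is correct and follows the same core route as the paper: the identical specialization $q\mapsto q^2$, $a=1/q$, $b=\lambda=1$ in \eqref{cor2cf}, with the same term-by-term matching of the partial numerators and denominators on the right-hand side. The only real divergence is in how the $G$-ratio on the left is evaluated as an infinite product. The paper evaluates the numerator and denominator \emph{separately}, quoting two series--product identities from the Slater list \cite{S52} (recorded at \eqref{mod6prodseq}), namely $G(1/q,1;1;q^2)=(q^3,q^3,q^6;q^6)_{\infty}(-q;q^2)_{\infty}/(q^2;q^2)_{\infty}$ and $G(q,q^2;1;q^2)=(q,q^5,q^6;q^6)_{\infty}(-q;q^2)_{\infty}/(q^2;q^2)_{\infty}$, and then simplifies the quotient. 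You instead identify the ratio with the product by combining Ramanujan's continued fraction from \cite[p.~154, Corollary~6.2.7]{ABRLNI} with Corollary \ref{cor2.1cf} under the same specialization. Both are legitimate; the paper's route is slightly more self-contained in that it does not pass through a second continued fraction identity (and hence avoids importing that identity's own convergence considerations), while yours leans on exactly the reference the paper itself points to for this product. Finally, your worry about $a=1/q$ having modulus greater than $1$ dissolves on inspection: the hypothesis genuinely inherited from Theorem \ref{th2.4} is $|z|=|qab/\lambda|<1$ (with $c=0$, so $|c/b|<1$ is automatic), and after the specialization this reads $|q|<1$; moreover both defining series for the $G$-functions converge for any fixed $a$ when $|q|<1$, since $a$ enters only through the bounded finite products $(a+\lambda)\cdots(a+\lambda q^{n-1})$ against the dominating factor $q^{(n^2+n)/2}$.
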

\begin{proof}
In \eqref{cor2cf} replace $q$ with $q^2$, set $a=1/q$ and $b = \lambda =1$, and right side becomes the right side of \eqref{mod6coreq}. For the left side, with the notation $G(a,b,\l)=G(a,\l;b;q)$ and employing the identities from the Slater list \cite{S52}
\begin{align}\label{mod6prodseq}
G(1/q,1;1;q^2)&=\sum_{n=0}^{\infty}\frac{(-q;q^2)_n q^{n^2}}{(q^4;q^4)_n}
=(q^3,q^3,q^6;q^6)_{\infty}\frac{(-q;q^2)_{\infty}}{(q^2;q^2)_{\infty}}, \\
G(q,q^2;1;q^2)&=\sum_{n=0}^{\infty}\frac{(-q;q^2)_n q^{n^2+2n}}{(q^4;q^4)_n}
=(q,q^5,q^6;q^6)_{\infty}\frac{(-q;q^2)_{\infty}}{(q^2;q^2)_{\infty}},
\end{align}
and the result follows after simplifying the quotient of infinite products. The series-product identities at  \eqref{mod6prodseq} were both stated by Ramanujan, and may also be found in \cite[pp.\ 85, 87, Entry 4.2.7, Entry 4.2.11]{ABRLNII}.
\end{proof}

\begin{corollary}\label{ser3cor}
If $|q|<1$, then
\begin{multline}\label{ser3cor2eq}
\sum_{n=0}^{\infty}(-1)^n q^{n(3n+2)}(1+q^{2n+1})
=
\frac{1}{1-q}
+
\frac{q^2-q^3}{1-q^3}
\+
\frac{q^4-q^2}{1-q^5}
\+
\frac{q^6-q^9}{1-q^7}
\+
\frac{q^8-q^4}{1-q^9}\\
\+
\frac{q^{10}-q^{15}}{1-q^{11}}
\+
\frac{q^{12}-q^6}{1-q^{13}}
\+
\cds.
\end{multline}
\end{corollary}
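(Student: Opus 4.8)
The plan is to derive \eqref{ser3cor2eq} from the new expansion \eqref{cor2cf} by the same specialize-then-identify strategy used for Corollary~\ref{mod6cor}; the one structural novelty is that the left side of \eqref{ser3cor2eq} is the \emph{reciprocal} of a ratio $G(a,b,\l)/G(aq,b,\l q)$, which is exactly why the displayed continued fraction opens with the partial quotient $1/(1-q)$ instead of a polynomial term $1+aq$ as in \eqref{mod6coreq}. First I would, in \eqref{cor2cf}, replace $q$ by $q^2$ and then set $a=-1/q$, $b=-1$, $\l=1$. Checking the general terms, the partial numerators become $\l q^2-abq^4=q^2-q^3$, $bq^2+\l q^4=q^4-q^2$, $\l q^6-abq^{10}=q^6-q^9,\dots$, the partial denominators become $1+aq^4=1-q^3$, $1+aq^6=1-q^5,\dots$, and the leading term is $1+aq^2=1-q$, so \eqref{cor2cf} specializes to
\begin{equation*}
\f{G(-1/q,1;-1;q^2)}{G(-q,q^2;-1;q^2)}=(1-q)+\df{q^2-q^3}{1-q^3}\+\df{q^4-q^2}{1-q^5}\+\df{q^6-q^9}{1-q^7}\+\cds .
\end{equation*}

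Taking reciprocals of both sides turns the right side into precisely the continued fraction displayed in \eqref{ser3cor2eq}, because $1/\left(b_0+\K(a_n/b_n)\right)$ is the continued fraction with successive partial quotients $1/b_0,\,a_1/b_1,\,a_2/b_2,\dots$. No separate convergence argument is needed: the preceding Corollary gives \eqref{cor2cf} for $|q|<1$, and since its value $1-q+\cds$ is nonzero the reciprocated continued fraction converges to the reciprocal of that value. It then remains only to prove the evaluation
\begin{equation*}
\f{G(-q,q^2;-1;q^2)}{G(-1/q,1;-1;q^2)}=\sum_{n=0}^{\infty}(-1)^n q^{n(3n+2)}(1+q^{2n+1}).
\end{equation*}

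For the denominator I would use, exactly as Corollary~\ref{mod6cor} uses the Slater list, the sum-to-product evaluation $G(-1/q,1;-1;q^2)=(q;q^2)_{\infty}/(q^2;q^2)_{\infty}$ (equivalently, its reciprocal is Gauss's sum $\sum_{n\ge0}q^{n(n+1)/2}$). Dividing this factor out reduces everything to the single identity
\begin{equation*}
G(-q,q^2;-1;q^2)=\f{(q;q^2)_{\infty}}{(q^2;q^2)_{\infty}}\sum_{n=0}^{\infty}(-1)^n q^{n(3n+2)}(1+q^{2n+1}).
\end{equation*}
Here the present corollary genuinely differs from Corollary~\ref{mod6cor}: the right side is a \emph{false theta function} times a product, not a pure infinite product, so one cannot merely replace both series by products and cancel. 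I expect the cleanest route is to recognize this false theta as the one in the Ramanujan identity behind \cite[p.~155, Corollary 6.2.9]{ABRLNI}, or else to prove the displayed series--product--series identity directly from a suitable Bailey pair; verifying that the false theta so produced is term-for-term $\sum_{n\ge0}(-1)^n q^{n(3n+2)}(1+q^{2n+1})$ is the main obstacle.
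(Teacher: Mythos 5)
Your proposal is correct and follows essentially the same route as the paper: the identical specialization $q\mapsto q^2$, $a=-1/q$, $b=-1$, $\lambda=1$ in \eqref{cor2cf} followed by inversion of both sides, with the left side evaluated via the Ramanujan--Andrews--Berndt identity \eqref{ser3eq}. The only difference is that the paper cites \eqref{ser3eq} wholesale from \cite[pp.~155--156, Corollary 6.2.9]{ABRLNI}, so the series--product--series evaluation you single out as ``the main obstacle'' is exactly the content of that cited result and needs no independent derivation here.
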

\begin{proof}
In \eqref{cor2cf}, invert both sides and replace $q$ with $q^2$, set $a=-1/q$ and $b = -1$ and $\lambda =1$, and right side becomes the right side of \eqref{ser3cor2eq}. For the left side, we use the identity (see \cite[pp. 155--156, Corollary 6.2.9]{ABRLNI})
\begin{equation}\label{ser3eq}
\frac{G(-q,q^2;-1;q^2)}{G(-1/q,1;-1;q^2)}
=\sum_{n=0}^{\infty}(-1)^n q^{n(3n+2)}(1+q^{2n+1}),
\end{equation}
and the result again follows.
\end{proof}

\begin{corollary}\label{ser3cor2}
If $|q|<1$, then
\begin{multline}\label{ser3cor2eq2}
1-\sum_{n=1}^{\infty} q^{n(3n-1)/2}(1-q^{n})
=
\frac{2}{2+q}
\+
\frac{q-q^3}{1+q^3}
\+
\frac{q^2+q^3}{1+q^5}
\+
\frac{q^5-q^9}{1+q^7}
\+
\frac{q^4+q^7}{1+q^9}\\
\+
\frac{q^{9}-q^{15}}{1+q^{11}}
\+
\frac{q^{6}+q^{11}}{1+q^{13}}
\+
\cds.
\end{multline}
\end{corollary}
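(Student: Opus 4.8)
The plan is to follow exactly the template set by Corollaries \ref{mod6cor} and \ref{ser3cor}, deriving \eqref{ser3cor2eq2} as a specialization of the master continued fraction \eqref{cor2cf}. First I would identify the parameter substitution that turns the right side of \eqref{cor2cf} into the right side of \eqref{ser3cor2eq2}. Examining the denominators $2+q, 1+q^3, 1+q^5, \dots$ and comparing with the pattern in \eqref{cor2cf} after replacing $q$ with $q^2$, the leading term $2/(2+q)$ together with the numerators $q-q^3, q^2+q^3, q^5-q^9, \dots$ strongly suggests setting $a=1/q$, $b=1$, $\lambda=-1$ (after the replacement $q \mapsto q^2$). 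I would verify this by substituting these values into the general terms of \eqref{cor2cf}: the numerators $\l q - abq^2, bq+\l q^2, \l q^3 - abq^5, \dots$ and denominators $1+aq^2, 1+aq^3, \dots$ should reproduce the displayed continued fraction after clearing the factor arising from the modified leading term $1+aq$.

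Once the right side matches, the substance is in identifying the left side, that is, evaluating $G(a,b,\l)/G(aq,b,\l q)$ at these parameter values as the series $1-\sum_{n=1}^{\infty} q^{n(3n-1)/2}(1-q^n)$. By the remark preceding Corollary \ref{mod6cor}, this quantity appears in Ramanujan's identity at \cite[p.\ 156, Corollary 6.2.11]{ABRLNI}, so the natural route is to invoke the corresponding series-product or series-series identity from that reference. Concretely I would cite the evaluation
\begin{equation*}
\frac{G(1/q,1;-1;q^2)}{G(q,q^2;-1;q^2)} = 1-\sum_{n=1}^{\infty} q^{n(3n-1)/2}(1-q^n),
\end{equation*}
or its reciprocal as dictated by the orientation of the substitution, exactly paralleling how \eqref{ser3eq} was used in the proof of Corollary \ref{ser3cor}. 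The convergence and the equality of \eqref{cor2cf} with $G(a,b,\l)/G(aq,b,\l q)$ are already established in the corollary to Theorem \ref{th2.4}, so no separate convergence argument is needed here; the identity holds for $|q|<1$ inherited from that corollary.

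The main obstacle I anticipate is purely bookkeeping: confirming that the specialization produces the precise leading structure $2/(2+q)$ rather than a term of the form $1+aq+\cdots$, and ensuring the alternating signs in the numerators come out correctly with $\lambda=-1$. Because $\lambda$ is negative here, the sign pattern of the numerators $\l q - abq^2$ versus $bq+\l q^2$ will differ from the earlier corollaries, and I would check each of the first several terms explicitly against \eqref{ser3cor2eq2} to pin down the substitution before asserting it. A secondary point is to make sure the left-hand series identity is quoted with the correct normalization, since the factor $1/(1+b)$ common to $G$ in the numerator and denominator cancels but the ratio must be arranged so that its leading term agrees with $2/(2+q)$; once the parameter choice is fixed this is a routine simplification of the quotient of the two $G$-series, and the result follows.
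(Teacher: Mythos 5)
Your overall strategy (specialize \eqref{cor2cf} and quote an Andrews--Berndt evaluation for the resulting quotient of $G$-functions) is the right family of ideas, and it is essentially what the paper does, but as written the proposal has two concrete defects, one of which is the actual crux of the proof. First, the substitution is wrong: with $q\mapsto q^2$, $a=1/q$, $b=1$ and $\lambda=-1$, the first partial numerator $\lambda q-abq^2$ of \eqref{cor2cf} becomes $\lambda q^2-abq^4=-q^2-q^3$, not the $q-q^3$ appearing in \eqref{ser3cor2eq2}. The choice that works is $\lambda=1/q$ (then $\lambda q^2-abq^4=q-q^3$, $bq^2+\lambda q^4=q^2+q^3$, $\lambda q^6-abq^{10}=q^5-q^9$, and so on, matching every displayed numerator). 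You flagged that you would pin the substitution down by checking the first few terms, but the value you committed to does not survive that check.

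Second, and more seriously, the left side of \eqref{ser3cor2eq2} is not a ratio $G(\cdot)/G(\cdot)$, nor the reciprocal of one. The Andrews--Berndt identity the paper invokes (its \eqref{ser3eq22}, from Corollary 6.2.11 of their book) has the form $2/(1+x)=1-\sum_{n\geq 1}q^{n(3n-1)/2}(1-q^{n})$, where $x$ is the relevant quotient of $G$-functions. Consequently the continued fraction in \eqref{ser3cor2eq2} is not a direct specialization of \eqref{cor2cf}: one must first specialize \eqref{cor2cf} so that the quotient equals $1+q+\cdots$, and then apply the M\"obius map $t\mapsto 2/(1+t)$ to both sides --- in the paper's words, ``invert, add 1, invert again and multiply by 2.'' That extra step is precisely what produces the leading term $2/(2+q)$, which you correctly identified as the sticking point but dismissed as bookkeeping; no choice of parameters in \eqref{cor2cf} yields that leading structure on its own, and the evaluation $G(1/q,1;-1;q^2)/G(q,q^2;-1;q^2)=1-\sum_{n\geq 1} q^{n(3n-1)/2}(1-q^{n})$ that you propose to cite is not the identity in the cited reference and should not be expected to hold. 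With $\lambda=1/q$ and the additional M\"obius transformation of both sides, the argument closes exactly as in Corollary \ref{ser3cor}.
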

\begin{proof}
The proof again follows similar lines.
In \eqref{cor2cf}, invert both sides and replace $q$ with $q^2$, set $a=1/q$ and $b = 1$ and $\lambda =1/q$.  To get the right side of \eqref{ser3cor2eq2}, invert, add 1, invert again and multiply by 2. For the left side, we again use an identity proved by Andrews and Berndt (see \cite[pp. 156--158, Corollary 6.2.11]{ABRLNI})
\begin{equation}\label{ser3eq22}
\frac{2}{1+\displaystyle{\frac{G(q,q;1;q^2)}{G(1/q,1/q;1;q^2)}}}
=1-\sum_{n=1}^{\infty} q^{n(3n-1)/2}(1-q^{n}),
\end{equation}
and the result once again follows.
\end{proof}

We also make use later of the identities in the following theorem.
\begin{theorem}\label{2phi1cfthm3}
Let $|q|, |z|<1$.

(i) If $|az/q|<1$,  then
\begin{multline}\label{2phi1eq31}
(1-c)\frac{_2\phi_1(a,b;c;q;z)}{_2\phi_1(a,bq;cq;q;z)}= (1-c)+(1-bq/a)az/q\\
-
\frac{(1-cq/a)(1-bq)az/q}{(1-cq)+(1-bq^{2}/a)az/q}
\-
\frac{(1-cq^{2}/a)(1-bq^{2})az/q}{(1-cq^2)+(1-bq^{3}/a)az/q}\phantom{asadasdaadaa}\\
\-
\frac{(1-cq^3/a)(1-bq^3)az/q}{(1-cq^3)+(1-bq^{4}/a)az/q}
\-
\frac{(1-cq^{4}/a)(1-bq^{4})az/q}{(1-cq^4)+(1-bq^{5}/a)az/q}
\-
\cds.
\end{multline}
(ii) If  $|az/q|=1$ but $az/q\not =1$, then
\begin{equation}\label{2phi1eq3105}
(1-c)\frac{_2\phi_1(a,b;c;q;z)}{_2\phi_1(a,bq;cq;q;z)}=
\lim_{n \to \infty}
\frac{A_n -a z A_{n-1}/q}{B_n -a z B_{n-1}/q},
\end{equation}
where $A_n/B_n$ denotes the $n$-th approximant of the continued fraction on the right side of \eqref{2phi1eq31}.

(iii) If $|az/q|>1$, then
{\allowdisplaybreaks\begin{multline}\label{2phi1eq311}
\frac{az}{q}\left(1-\frac{bq}{a}\right)
\frac{_2\phi_1(q/a,c/a;bq/a;q;q/z)}
{_2\phi_1(q/a,cq/a;bq^2/a;q;q/z)}= (1-c)+(1-bq/a)az/q\\
-
\frac{(1-cq/a)(1-bq)az/q}{(1-cq)+(1-bq^{2}/a)az/q}
\-
\frac{(1-cq^{2}/a)(1-bq^{2})az/q}{(1-cq^2)+(1-bq^{3}/a)az/q}\phantom{asadasdaadaa}\\
\-
\frac{(1-cq^3/a)(1-bq^3)az/q}{(1-cq^3)+(1-bq^{4}/a)az/q}
\-
\frac{(1-cq^{4}/a)(1-bq^{4})az/q}{(1-cq^4)+(1-bq^{5}/a)az/q}
\-
\cds.
\end{multline}}
\end{theorem}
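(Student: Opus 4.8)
The plan is to establish the three-part statement of Theorem \ref{2phi1cfthm3} starting from Heine's continued fraction \eqref{hcf}, which already gives an expansion for the same ratio $_2\phi_1(a,b;c;q;z)/{}_2\phi_1(a,bq;cq;q;z)$. The right-hand side of \eqref{2phi1eq31} has the visibly different shape of a continued fraction with nontrivial $b_n$ terms of the form $(1-cq^n)+(1-bq^{n+1}/a)az/q$, so the natural route is \emph{not} to manipulate \eqref{hcf} directly but to derive \eqref{2phi1eq31} from scratch via the three-term recurrence satisfied by the contiguous $_2\phi_1$ functions. First I would set $F_n := {}_2\phi_1(a,bq^{n};cq^{n};q;z)$ and record the standard contiguous relation linking $F_{n-1}$, $F_n$ and $F_{n+1}$; rearranging this into the form $F_{n-1}/F_n = \beta_n + \alpha_{n+1}/(F_n/F_{n+1})$ produces exactly the partial numerators $-(1-cq^n/a)(1-bq^n)az/q$ and partial denominators $(1-cq^n)+(1-bq^{n+1}/a)az/q$ appearing in \eqref{2phi1eq31}. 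Iterating this recurrence formally yields the continued fraction; the factor $(1-c)$ on the left and the leading terms $(1-c)+(1-bq/a)az/q$ come from the $n=0$ step.

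The substantive work — and the reason the theorem is split into three cases — is convergence, which is exactly the gap the authors criticize in \cite{bhads}. Formal iteration of a three-term recurrence only shows that the ratio is a \emph{modified approximant} $S_n(w_n)$ with $w_n$ governed by the tail ratio $F_n/F_{n+1}$; to conclude that the continued fraction itself converges to the stated limit I must control the genuine approximants $A_n/B_n$. For part (i), where $|az/q|<1$, the plan is to show that the numerator and denominator sequences $A_n$ and $B_n$ \emph{converge separately} (this is the recurring theme flagged in the abstract and introduction) to limits that are themselves convergent $_2\phi_1$-type series in $z$; the hypotheses $|z|<1$ and $|c/b|<1$ guarantee the relevant series converge, and $|az/q|<1$ makes the terms $az/q$ appearing in the $b_n$ small enough that the tail contribution $-az\,A_{n-1}/q$ and $-az\,B_{n-1}/q$ is negligible in the limit, so that $\lim A_n/B_n$ equals the ratio obtained from the formal iteration. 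For part (ii), on the boundary $|az/q|=1$ with $az/q\neq 1$, the separate limits of $A_n$ and $B_n$ need not exist, so the cleanest correct statement is precisely the modified limit \eqref{2phi1eq3105}, i.e.\ one keeps the correction terms $-az A_{n-1}/q$ and $-az B_{n-1}/q$ rather than dropping them; here I would verify that this modified quotient still equals the left-hand side by tracking the determinant formula $A_n B_{n-1}-A_{n-1}B_n = \prod a_k$ and showing the correction exactly compensates for the failure of separate convergence.

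For part (iii), where $|az/q|>1$, the key observation is the $q$-analogue of Heine's symmetry: replacing $(a,b,c,z)$ by $(q/a,\,c/a,\,bq/a,\,q/z)$ sends $|az/q|$ to $|(q/a)(q/z)/q| = |q/(az)| < 1$, so the \emph{same} continued fraction \eqref{2phi1eq31} — which one checks is invariant under this substitution up to the explicit prefactor $(az/q)(1-bq/a)$ — now falls under the convergent regime of part (i). Thus part (iii) follows by applying part (i) to the transformed parameters and identifying the resulting ratio with $(az/q)(1-bq/a)\,{}_2\phi_1(q/a,c/a;bq/a;q;q/z)/{}_2\phi_1(q/a,cq/a;bq^2/a;q;q/z)$. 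I expect the main obstacle to be part (ii): establishing that the formal limit coincides with the modified approximant limit \eqref{2phi1eq3105} on the unit circle requires delicate estimates showing the separate sequences $A_n$, $B_n$ grow in a controlled oscillatory manner, and ruling out the degenerate point $az/q=1$ (which is why it is excluded) where the construction would break down entirely.
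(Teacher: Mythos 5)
First, a point of reference: the paper does not actually prove Theorem \ref{2phi1cfthm3}. Immediately after the statement it defers to \cite{ABBW85} (described as ``slightly incomplete''), to Lorentzen \cite{J89} for the special cases needed later, and to \cite[Theorem 16.17]{McL16} for a full proof. So there is no in-paper argument to compare against; what you have written is an outline of the standard proof that lives in those references. Your overall architecture is the right one: generate the continued fraction from the contiguous relation for $F_n={}_2\phi_1(a,bq^{n};cq^{n};q;z)$, note that the induced tail sequence (a normalization of $F_{n+1}/F_n$) tends to $-az/q$, and split into cases according to whether $-az/q$ is the attracting fixed point ($|az/q|<1$), the repelling one ($|az/q|>1$), or lies on the boundary. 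Your reduction of (iii) to (i) is correct and clean: under $(a,b,c,z)\mapsto(q/a,\,c/a,\,bq/a,\,q/z)$ every partial denominator is multiplied by $\rho=q/(az)$ and every partial numerator by $\rho^{2}$, so the new continued fraction is an equivalence transformation of the old one with value $\rho$ times the old value, and unwinding the prefactor gives exactly \eqref{2phi1eq311}.

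Three concrete soft spots. (1) In part (i), separate convergence of $A_n$ and $B_n$ does not by itself identify the limit as $(1-c)\,{}_2\phi_1(a,b;c;q;z)/{}_2\phi_1(a,bq;cq;q;z)$; you need either explicit limiting series for $A_n$ and $B_n$, or (more cleanly) the observation that the left side equals the modified approximant $S_n(g_n)=(A_n+g_nA_{n-1})/(B_n+g_nB_{n-1})$ \emph{exactly} for every $n$, where $g_n$ is the tail sequence coming from the $F$'s, with $g_n\to-az/q\neq-1$; combined with separate convergence this forces $\lim S_n(g_n)=\lim A_n/B_n$. Also, $|c/b|<1$ is a hypothesis of Theorem \ref{th2.4}, not of this theorem, and plays no role here. (2) In part (ii) the determinant formula only tells you that the unmodified approximants fail to converge (the products $\prod a_k$ do not tend to $0$ when $|az/q|=1$); the real work is to prove the asymptotics $A_n=C_1+C_2(az/q)^{n}+o(1)$ (and similarly for $B_n$) from the fact that the recurrence is a geometrically small perturbation of the constant one with characteristic roots $1$ and $az/q$, after which $A_n-azA_{n-1}/q\to(1-az/q)C_1$ and the same $S_n(g_n)$ device identifies the limit. (3) In part (iii), invoking part (i) at the transformed parameters requires $|q/z|<1$ for the new ${}_2\phi_1$ series to converge, and this does not follow from $|q|,|z|<1$ together with $|az/q|>1$; either that restriction must be imposed or the functions must be continued analytically. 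None of these is fatal, but each is a step your sketch currently asserts rather than supplies.
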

A slightly incomplete proof of \eqref{2phi1eq31} may be found in \cite{ABBW85}, and Lorentzen \cite{J89} gave a complete proof of the special case of \eqref{2phi1eq31} and \eqref{2phi1eq311} need to prove Ramanujan's identity \eqref{cframentry12} below. A full proof may be found in \cite[Chapter 16, Theorem 16.17]{McL16}.

Also observe that the continued fractions on the right sides of \eqref{2phi1eq31} and \eqref{2phi1eq311} are the same.

One implication is a proof of Hirschhorn's continued fraction for Ramanujan's ratio $G(a,\lambda;b;q)/G(aq,\lambda q;b;q)$.
\begin{corollary}
If $|q|,\,|b|<1$, then
\begin{equation}\label{gablqhirsch}
\frac{G(a,\lambda;b;q)}{G(aq,\lambda q;b;q)}=
1-b+aq +\dfrac{\lambda
q+b}{1-b+aq^2}\+\dfrac{\lambda q^2 +b}{1-b+aq^3}
\+\cds.
\end{equation}
\end{corollary}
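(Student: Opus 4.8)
The plan is to derive \eqref{gablqhirsch} from part (i) of Theorem \ref{2phi1cfthm3} by making exactly the same specialization used in the proof of Corollary \ref{cor2.1cf}. Concretely, in \eqref{2phi1eq31} I would set $c=0$ and simultaneously replace $a$ by $-\l/a$, $b$ by $-\l/b$ and $z$ by $abq/\l$. Under this substitution one computes $az/q=-b$, while the ratio of the new $b$ to the new $a$ equals $a/b$. A direct calculation then shows that the leading term $(1-c)+(1-bq/a)az/q$ collapses to $1-b+aq$, that the $n$th partial numerator $(1-cq^{n}/a)(1-bq^{n})az/q$ becomes $-(b+\l q^{n})$, and that the $n$th partial denominator $(1-cq^{n})+(1-bq^{n+1}/a)az/q$ becomes $1-b+aq^{n+1}$. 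Since \eqref{2phi1eq31} is written with subtraction signs, I would finish the right side by invoking the elementary equivalence $L\-\df{N_1}{D_1}\-\df{N_2}{D_2}\-\cds=L+\df{-N_1}{D_1}\+\df{-N_2}{D_2}\+\cds$, which negates every partial numerator and turns each connecting sign into a plus; the negated numerators $-\bigl(-(b+\l q^{n})\bigr)=\l q^{n}+b$ then reproduce the right side of \eqref{gablqhirsch} exactly.

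For the left side I would argue precisely as in Corollary \ref{cor2.1cf}. With $c=0$ the ratio on the left of \eqref{2phi1eq31} becomes $\f{_2\phi_1(-\l/a,-\l/b;0;q;abq/\l)}{_2\phi_1(-\l/a,-\l q/b;0;q;abq/\l)}$, and applying Jackson's transformation \eqref{hjeq} to the numerator and the denominator separately converts this quotient into $G(a,\l;b;q)/G(aq,\l q;b;q)$, exactly as in the derivation of \eqref{cor1cf}. This matches the left side of \eqref{gablqhirsch}, so the only remaining task is to justify the range of validity.

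Checking the hypotheses of Theorem \ref{2phi1cfthm3}(i) is where I expect the one genuine subtlety to lie. Here $|az/q|=|b|$, so the condition $|az/q|<1$ needed for part (i) is guaranteed by the assumption $|b|<1$, independently of $a$ and $\l$. The obstacle is the additional requirement $|z|<1$ in part (i): after the substitution $z=abq/\l$ this reads $|abq/\l|<1$, which is not implied by $|q|,|b|<1$ alone. I would remove this restriction by analytic continuation in $\l$. For $|\l|$ large enough that $|abq/\l|<1$, part (i) applies directly and yields \eqref{gablqhirsch}. Moreover the partial numerators $\l q^{n}+b\to b$ and partial denominators $1-b+aq^{n+1}\to 1-b$ have limits free of $\l$, so the right side of \eqref{gablqhirsch} is a limit--periodic continued fraction whose convergence is governed solely by $|b|<1$; hence it converges for every $\l$ and defines a function analytic in $\l$. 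Since $G(a,\l;b;q)/G(aq,\l q;b;q)$ is likewise meromorphic in $\l$ for $|q|<1$, the two sides agree on the region $|\l|>|abq|$ and therefore, by the identity theorem, agree for all $\l$, establishing \eqref{gablqhirsch} under the stated hypotheses $|q|,|b|<1$.
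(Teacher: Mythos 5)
Your proof is correct and takes essentially the same route as the paper: the identical specialization $c=0$, $a\mapsto -\lambda/a$, $b\mapsto -\lambda/b$, $z\mapsto abq/\lambda$ of \eqref{2phi1eq31}, with Jackson's transformation \eqref{hjeq} identifying the left side as $G(a,\lambda;b;q)/G(aq,\lambda q;b;q)$ and the sign-flip equivalence transformation producing the stated continued fraction. The only difference is that you also supply the analytic-continuation-in-$\lambda$ argument needed to remove the restriction $|abq/\lambda|<1$ inherited from the hypothesis $|z|<1$ of Theorem \ref{2phi1cfthm3}, a genuine subtlety that the paper's ``after a little manipulation'' leaves unaddressed.
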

\begin{proof}
The argument follows a similar path as in the proof of \eqref{cor1cf}.
In \eqref{2phi1eq31}, set $c=0$ and simultaneously replace $a$ with $-\lambda/a$, $b$ with $-\lambda/b$ and $z$ with $qab/\lambda$, and after a little manipulation \eqref{gablqhirsch} follows.
\end{proof}

\section{New versions of some continued fraction identities of Ramanujan}

Ramanujan give beautiful continued fraction expansions for two ratios of infinite products. Each of this derive ultimately from continued fraction expansion for the ratio $_2\phi_1(a,b;c;q;z)/\,_2\phi_1(a,b;c;q;z)$ upon specializing the parameters. Since Heine's continued fraction  \eqref{hcf}, the continued fraction at \eqref{hcf2} and that at \eqref{2phi1eq31} all involve the same ratio of $_2\phi_1$ functions, we are able to derive new continued fraction expansions for these infinite products.

The identities in Theorem \ref{2phi1cfthm3} were key to the proof of the first of these, a continued fraction identity of Ramanujan \cite[Second Notebook, Chapter 16, Entry 12]{lnb}. Proofs have been given previously in \cite{ABBW85} and \cite{J89},  but the proof given here is possibly shorter and more direct than either of these. Before coming to this, we prove a necessary lemma. We first recall the Bailey-Daum identity (see \cite[Section 1.8]{gasper}): if $|q/b|$, $|q|<1$, then
\begin{equation}\label{bdeq}
\sum_{k=0}^{\infty}\frac{(a,b;q)_k}{(aq/b,q;q)_k}\left( \frac{-q}{b}\right)^k =
\frac{(-q;q)_{\infty}(aq,aq^2/b^2;q^2)_{\infty}}{(aq/b,-q/b;q)_{\infty}}.
\end{equation}

\begin{lemma}
If $|q|<1$ and $|b|>1$, then
\begin{equation}\label{2phi1prlem}
\frac{_2\phi_1(a,b;a/(bq);q;-b^{-1})}
{_2\phi_1(a,bq;a/b;q;-b^{-1})}
-1=
\frac{1}{1-a/(bq)}
\frac{(a,a/(b^2q);q^2)_{\infty}}
{(aq,a/b^2;q^2)_{\infty}}.
\end{equation}
\end{lemma}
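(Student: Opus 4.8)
The plan is to write the left side of \eqref{2phi1prlem} as $(N-D)/D$, where $N={}_2\phi_1(a,b;a/(bq);q;-b^{-1})$ and $D={}_2\phi_1(a,bq;a/b;q;-b^{-1})$ are the numerator and denominator series, and to evaluate both $D$ and the difference $N-D$ in closed form by means of the Bailey-Daum identity \eqref{bdeq}. The first observation is that $D$ is already of Bailey-Daum shape: under the substitution $(a,b)\mapsto(a,bq)$ in \eqref{bdeq} the lower parameter $aq/(bq)=a/b$ and the argument $-q/(bq)=-b^{-1}$ match exactly, so \eqref{bdeq} gives $D=(-q;q)_\infty(aq,a/b^2;q^2)_\infty/(a/b,-b^{-1};q)_\infty$. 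Here the hypothesis $|b|>1$ is exactly what is needed, since it forces $|b^{-1}|<1$, both for the convergence of the series and for the validity of \eqref{bdeq}.

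By contrast the numerator $N$ is \emph{not} of Bailey-Daum type, and overcoming this is the main obstacle. I would get around it by forming $N-D$ term by term. Writing $c=a/(bq)$ so that $cq=a/b$, the $k$-th summands of $N$ and $D$ share the common factor $(a;q)_k(-b^{-1})^k/(q;q)_k$, and the remaining bracket $(b;q)_k/(c;q)_k-(bq;q)_k/(cq;q)_k$ collapses once one uses the elementary identity $(1-b)(1-cq^k)-(1-bq^k)(1-c)=(c-b)(1-q^k)$. The factor $1-q^k$ annihilates the $k=0$ term and cancels part of $(q;q)_k$, while $1-bq^k$ cancels part of $(bq;q)_k$; after shifting the index by one, $N-D$ reduces to $\frac{(c-b)(1-a)(-b^{-1})}{(1-c)(1-cq)}\,{}_2\phi_1(aq,bq;cq^2;q;-b^{-1})$.

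The key point is that this residual series \emph{is} again of Bailey-Daum type: since $cq^2=aq/b$, the substitution $(a,b)\mapsto(aq,bq)$ in \eqref{bdeq} matches both the lower parameter $aq/b$ and the argument $-b^{-1}$, and evaluates it as $(-q;q)_\infty(aq^2,aq/b^2;q^2)_\infty/(aq/b,-b^{-1};q)_\infty$. Dividing this by the closed form for $D$, the factors $(-q;q)_\infty$ and $(-b^{-1};q)_\infty$ cancel, the relation $(a/b;q)_\infty/(aq/b;q)_\infty=1-a/b=1-cq$ absorbs the $1-cq$ denominator, and the splittings $(aq^2;q^2)_\infty=(a;q^2)_\infty/(1-a)$ and $(aq/b^2;q^2)_\infty=(a/(b^2q);q^2)_\infty/(1-a/(b^2q))$ turn the $q^2$-products into those on the right of \eqref{2phi1prlem}. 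It then remains only to check that the leftover rational prefactor $(c-b)(-b^{-1})/(1-a/(b^2q))$ equals $1$, which follows at once from $c-b=(a-b^2q)/(bq)$ and $1-a/(b^2q)=(b^2q-a)/(b^2q)$. I expect the bracket identity and this final cancellation to be the only steps requiring genuine care.
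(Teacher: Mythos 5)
Your proof is correct and follows essentially the same route as the paper's: evaluate the denominator by Bailey--Daum, form the termwise difference $N-D$, collapse it via the elementary bracket identity and an index shift to a second Bailey--Daum--evaluable series, and simplify. The details you supply (the prefactor $\frac{(1-a)(1-a/(b^2q))}{(1-a/(bq))(1-a/b)}$ and the residual series $\sum_{n\ge 0}\frac{(aq,bq;q)_n}{(aq/b,q;q)_n}(-1/b)^n$) are exactly the steps the paper compresses into ``after some simple algebra followed by a shift of summation index.''
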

\begin{proof}
By the Bailey-Daum identity \eqref{bdeq},
$$
_2\phi_1(a,bq;a/b;q;-b^{-1}) = \frac{(-q;q)_{\infty}(aq,a/b^2;q^2)_{\infty}}
{(a/b,-1/b;q)_{\infty}},
$$
so what remains to be shown is that
\begin{multline}\label{2phi1diffeq}
_2\phi_1(a,b;a/(bq);q;-b^{-1})-
\, _2\phi_1(a,bq;a/b;q;-b^{-1})\\
=
\frac{(-q;q)_{\infty}(a,a/(b^2q);q^2)_{\infty}}
{(1-a/(bq))(a/b,-1/b;q)_{\infty}}.
\end{multline}
However, after some simple algebra followed by a shift of summation index, the left side of \eqref{2phi1diffeq} simplifies to
$$
\frac{(1-a)(1-a/(b^2q))}{(1-a/(bq))(1-a/b)}
\sum_{n=0}^{\infty} \frac{(aq,bq;q^2)_n}{(aq/b,q;q)_n}
\left (\frac{-1}{b}\right )^n,
$$
and the result  follows after one further application of \eqref{bdeq}.
\end{proof}

We use this lemma to give a new proof of the first of the Ramanujan identities mentioned above.

\begin{corollary}
Let $|q|<1$. Then
\begin{multline}\label{cframentry12}
1-ab
+
\frac{(a-bq)(b-aq)}{(1-ab)(1+q^2)}
\+
\frac{(a-bq^3)(b-aq^3)}{(1-ab)(1+q^4)}
\+
\cds \\
=
\begin{cases}
{\displaystyle
\frac{(a^2q,b^2q;q^4)_{\infty}}{(a^2q^3,b^2q^3;q^4)_{\infty}}}, &|ab|<1,\\
{\displaystyle -ab\frac{(q/a^2,q/b^2;q^4)_{\infty}}{(q^3/a^2,q^3/b^2;q^4)_{\infty}}}, &|ab|>1.
\end{cases}
\end{multline}
\end{corollary}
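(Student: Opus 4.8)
The plan is to obtain \eqref{cframentry12} from Theorem \ref{2phi1cfthm3} by a single specialization of parameters, exploiting that the continued fractions on the right of \eqref{2phi1eq31} and \eqref{2phi1eq311} coincide, and then to evaluate the two $_2\phi_1$ ratios on the left by the lemma. The two cases $|ab|<1$ and $|ab|>1$ will emerge from the two regimes $|az/q|<1$ and $|az/q|>1$ separating parts (i) and (iii); this is exactly what produces the two different infinite products.

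First I would replace $q$ by $q^2$ throughout Theorem \ref{2phi1cfthm3} and specialize its parameters to
$$(a,b,c,z)=\left(a^2q,\ \frac{a}{bq},\ ab,\ -\frac{bq}{a}\right),$$
where from here on $a,b$ denote the parameters of \eqref{cframentry12}. A direct check gives $az/q^2=-ab$, turns the $n$-th partial numerator of the continued fraction in \eqref{2phi1eq31} into $-(a-bq^{2n-1})(b-aq^{2n-1})$ and the $n$-th partial denominator into $(1-ab)(1+q^{2n})$, and collapses the leading term to $2(1-ab)$. Once the sign flips forced by the subtractive \- entries are absorbed, the right side of \eqref{2phi1eq31} becomes
\begin{align*}
2(1-ab)+\frac{(a-bq)(b-aq)}{(1-ab)(1+q^2)}\+\frac{(a-bq^3)(b-aq^3)}{(1-ab)(1+q^4)}\+\cds ,
\end{align*}
whose fractional tail is exactly that of \eqref{cframentry12}. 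Hence the continued fraction of \eqref{cframentry12} is this specialized expression with a single copy of $(1-ab)$ stripped from the leading term.

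That stray $(1-ab)$ is furnished by the lemma. For $|ab|<1$ part (i) applies, and under the specialization the left side of \eqref{2phi1eq31} is $(1-ab)$ multiplied by precisely the ratio evaluated in the lemma (with $q$ replaced by $q^2$ there as well). The lemma gives that ratio as $1+\frac{1}{1-ab}\cdot\frac{(a^2q,b^2q;q^4)_{\infty}}{(a^2q^3,b^2q^3;q^4)_{\infty}}$, so the left side of \eqref{2phi1eq31} equals $(1-ab)+\frac{(a^2q,b^2q;q^4)_{\infty}}{(a^2q^3,b^2q^3;q^4)_{\infty}}$; stripping the leading $(1-ab)$ gives the first case. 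For $|ab|>1$ part (iii) applies instead, its prefactor reducing to $1-ab$ under the specialization, while the ratio there is again of the lemma's shape, now with the lemma's two free parameters equal to $q/a^2$ and $b/(aq)$. Using $\frac{1-ab}{1-1/(ab)}=-ab$ to carry the prefactor through the lemma gives $(1-ab)-ab\,\frac{(q/a^2,q/b^2;q^4)_{\infty}}{(q^3/a^2,q^3/b^2;q^4)_{\infty}}$, and stripping the leading $(1-ab)$ gives the second case.

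The delicate point is convergence, not algebra. In the first case part (i) and the lemma hold together as soon as $|a|>|b|\,|q|$; since this inequality or its $a\leftrightarrow b$ mirror always holds (their simultaneous failure would force $|ab|\le|ab|\,|q|^2$), the symmetry of \eqref{cframentry12} secures the first case throughout $|ab|<1$. In the second case, however, part (iii) requires $|q|<|z|<1$, which under the specialization forces $|b/a|\in(1,1/|q|)$; even together with its mirror this only gives the annulus $|q|<|b/a|<1/|q|$, which does not exhaust $|ab|>1$. I would therefore prove the identity on that annulus and extend it to all of $|ab|>1$ by analytic continuation in $a,b$ for fixed $|q|<1$, the continued fraction being holomorphic there by the separate convergence of its numerators and denominators stressed throughout this paper. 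Establishing the legitimacy of this continuation is the main obstacle.
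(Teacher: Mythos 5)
Your treatment of the case $|ab|<1$ is essentially the paper's own proof: the same specialization $q\to q^2$, $(a,b,c,z)\to(a^2q,\,a/(bq),\,ab,\,-bq/a)$ in \eqref{2phi1eq31}, followed by evaluation of the resulting ratio of $_2\phi_1$ functions via the lemma \eqref{2phi1prlem}. Your observation that the constraint $|a|>|b|\,|q|$ (needed both for $|z|<1$ and for the lemma's hypothesis $|b|>1$ after substitution) can always be arranged by the $a\leftrightarrow b$ symmetry of \eqref{cframentry12} is correct and is a worthwhile point that the paper leaves implicit.

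For $|ab|>1$, however, your route has the genuine gap that you yourself flag. Part (iii) of Theorem \ref{2phi1cfthm3} combined with the lemma only yields the identity on the annulus $|q|<|b/a|<1/|q|$, and the proposed extension to all of $|ab|>1$ by analytic continuation is not established: the separate convergence of numerators and denominators proved in the paper concerns the continued fractions of Theorem \ref{gcf}, not the continued fraction in \eqref{cframentry12}, so holomorphy of the left side of \eqref{cframentry12} on a connected domain exhausting $|ab|>1$ would have to be proved from scratch. The paper avoids part (iii) and analytic continuation altogether: it factors $-ab$ out of the continued fraction and applies an equivalence transformation with factors $r_i=-1/(ab)$, which shows that the left side of \eqref{cframentry12} evaluated at $(a,b)$ equals $-ab$ times the left side evaluated at $(1/a,1/b)$; since $|(1/a)(1/b)|<1$, the already-proved first case then gives the second case immediately. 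Replacing your continuation argument by this short reduction closes the gap.
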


\begin{proof}
First suppose $|ab|<1$. In \eqref{2phi1eq31},
 replace $q$ with $q^2$ and then replace $a$ with $a^2q$, $b$ with $a/(bq)$, $c$ with $ab$ and $z$ with $-bq/a$.
 The right side of the resulting identity is $1-ab$ plus the left side of \eqref{cframentry12}. Hence the proof of \eqref{cframentry12} will follow if it can be shown that
 \begin{equation}\label{ramprcfeq}
 \frac{(1-ab)\, _2\phi_1(a^2q,a/(bq);ab;q^2;-qb/a)}{\, _2\phi_1(a^2q,aq/b;abq^2;q^2;-qb/a)}-(1-ab)
 =
 \frac{(a^2q,b^2q;q^4)}{(a^2q^3,b^2q^3;q^4)}.
 \end{equation}
However, this follows from \eqref{2phi1prlem}, after replacing $q$ with $q^2$ and then $a$ with $a^2q$ and $b$ with $a/(bq)$.

For the case $|ab|>1$, factor out $-ab$ from the continued fraction on the left side of \eqref{cframentry12}, apply an equivalence transformation with each factor $r_i=-1/ab$, and the result follows from the $|ab|<1$ case, with $1/a$ instead of $a$ and $1/b$ instead of $b$.
\end{proof}
Remark:  The identity is also true when the continued fraction terminates, when $a=bq^{2k+1}$, some $k\in \mathbb{Z}$.

We now give two new continued fraction expansion for the infinite product in the previous corollary, in the case $|ab|<1$ (the case $|ab|>1$ is similar and is omitted).
\begin{corollary}
If $|q|,\, |qb/a|<1$, then
\begin{multline}
\label{cframentry12eq3}
\frac{(a^2q,b^2q;q^4)_{\infty}}{(a^2q^3,b^2q^3;q^4)_{\infty}}
=
\frac{(1-a^2q)(1-b^2q)}{(1-abq^2)}
\+
\frac{(a-bq)(b-aq)q^2}{(1-abq^4)}\\
\+
\frac{(1-a^2q^3)(1-b^2q^3)q^2}{(1-abq^6)}
\+
\frac{(a-bq^3)(b-aq^3)q^4}{(1-abq^8)}
\+
\cds.
\end{multline}
\end{corollary}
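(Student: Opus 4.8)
The plan is to obtain \eqref{cframentry12eq3} from Heine's continued fraction \eqref{hcf} in exactly the way \eqref{cframentry12} was obtained from \eqref{2phi1eq31}; the point is that \eqref{hcf}, \eqref{hcf2} and \eqref{2phi1eq31} are all expansions of the \emph{same} $_2\phi_1$ ratio, so a second expansion of that ratio yields a second continued fraction for the same infinite product. The starting point is already available: equation \eqref{ramprcfeq}, established in the proof of the previous corollary by means of the Lemma at \eqref{2phi1prlem}, can be rearranged as
\begin{equation*}
\frac{(a^2q,b^2q;q^4)_{\infty}}{(a^2q^3,b^2q^3;q^4)_{\infty}}
=(1-ab)\left(\frac{{}_2\phi_1(a^2q,a/(bq);ab;q^2;-qb/a)}{{}_2\phi_1(a^2q,aq/b;abq^2;q^2;-qb/a)}-1\right).
\end{equation*}
Thus it suffices to expand the indicated $_2\phi_1$ ratio as a continued fraction, subtract $1$, and multiply by $(1-ab)$.

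Next I would apply \eqref{hcf} under the substitutions $q\to q^2$, $a\to a^2q$, $b\to a/(bq)$, $c\to ab$ and $z\to -qb/a$. With these choices the left side of \eqref{hcf} is precisely the $_2\phi_1$ ratio above, so \eqref{hcf} exhibits that ratio as $1$ plus a continued fraction; subtracting $1$ and multiplying by $(1-ab)$ then presents the left side of \eqref{cframentry12eq3} as $(1-ab)$ times this continued fraction, and the remaining work is to simplify the substituted partial numerators and denominators. One checks that the first partial numerator collapses to $(1-a^2q)(1-b^2q)$ over the denominator $(1-ab)(1-abq^2)$; more generally the odd-indexed numerators reduce to $(1-a^2q^{2k-1})(1-b^2q^{2k-1})q^{2k-2}$ and the even-indexed ones to $(a-bq^{2k-1})(b-aq^{2k-1})q^{2k}$, while the partial denominators are $1-abq^{2n}$, the first one alone carrying an extra factor $1-ab$.

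Finally I would clear the prefactor $(1-ab)$ by an equivalence transformation with $r_1=(1-ab)^{-1}$ and $r_n=1$ for $n\ge 2$: this cancels the stray $1-ab$ against the anomalous factor in the first partial denominator, leaves every higher partial numerator and denominator unchanged, and produces exactly the continued fraction in \eqref{cframentry12eq3}. Convergence imposes no extra burden, since \eqref{hcf} already holds as a genuine convergent identity whenever $|q^2|<1$ and $|z|=|qb/a|<1$, which are precisely the stated hypotheses $|q|,|qb/a|<1$. The only real obstacle is bookkeeping: one must track the single factor of $1-ab$ carefully through the equivalence transformation so that it vanishes from every term except where it cancels the first denominator, and verify that the substituted Heine numerators telescope cleanly into the two product families claimed above.
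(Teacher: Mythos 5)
Your proposal is correct and follows essentially the same route as the paper: identify the infinite product with the $_2\phi_1$ ratio via \eqref{ramprcfeq} (which rests on the Bailey--Daum lemma \eqref{2phi1prlem}), then substitute $q\to q^2$, $a\to a^2q$, $b\to a/(bq)$, $c\to ab$, $z\to -bq/a$ into Heine's continued fraction \eqref{hcf} and absorb the factor $1-ab$ (the paper phrases this as multiplying both sides by $1-c$ before substituting, which is the same bookkeeping as your equivalence transformation with $r_1=(1-ab)^{-1}$). Your computed partial numerators and denominators match \eqref{cframentry12eq3}, so nothing is missing.
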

\begin{proof}
Replace $q$ with $q^2$ in \eqref{hcf}, multiply both sides by $1-c$ and then replace $a$ with $a^2q$, $b$ with $a/(bq)$, $c$ with $ab$ and $z$ with $-bq/a$ so that the left side of \eqref{hcf} becomes the left side of \eqref{ramprcfeq}, and thus equals the right side of \eqref{ramprcfeq}. The same changes on the right side of \eqref{hcf} leads to the right side of \eqref{cframentry12eq3}.
\end{proof}
The restriction $|qb/a|<1$ is there to ensure that the requirements of \eqref{hcf} are met (in particular, that $|z|<1$), but \eqref{cframentry12eq3} may also hold for values of $a$ and $b$ that do not satisfy this requirement.

\begin{corollary}\label{cramprcfeq22}
If $|q|<1$ and   $|b q|<|a|<1/|b|$, then
\begin{multline}\label{cframentry12eq2}
\frac{(a^2q,b^2q;q^4)_{\infty}}{(a^2q^3,b^2q^3;q^4)_{\infty}}
=
1+ab
-
\frac{(a+bq)(b+aq)}{(1+q^2)}
\+
\frac{(a-bq)(b-aq)q^2}{(1+q^4)}\\
\-
\frac{(a+bq^3)(b+aq^3)q^2}{(1+q^6)}
\+
\frac{(a-bq^3)(b-aq^3)q^4}{(1+q^8)}
\-
\cds.
\end{multline}
\end{corollary}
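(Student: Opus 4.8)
The plan is to run the same parameter specialization used in the proof of the preceding corollary \eqref{cframentry12eq3}, but to apply it to the continued fraction of Theorem \ref{th2.4} rather than to Heine's continued fraction \eqref{hcf}. The convenient starting point is the ``multiplied'' identity that already appears inside the proof of Theorem \ref{th2.4}, namely
$$(1-c)\frac{_2\phi_1(a,b;c;q;z)}{_2\phi_1(a,bq;cq;q;z)} = 1-bz + \frac{(c-abz)(z-1)}{1-bzq}\+\frac{(1-bq)(cqz-az)}{1-bzq^2}\+\cds .$$
First I would replace $q$ with $q^2$ throughout and then set $a\mapsto a^2q$, $b\mapsto a/(bq)$, $c\mapsto ab$ and $z\mapsto -bq/a$, exactly the substitution behind \eqref{ramprcfeq}.

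Under this specialization the left-hand side becomes the left-hand side of \eqref{ramprcfeq}, so by the lemma \eqref{2phi1prlem} (used with $q\mapsto q^2$, $a\mapsto a^2q$, $b\mapsto a/(bq)$) it equals $(1-ab)+(a^2q,b^2q;q^4)_\infty/(a^2q^3,b^2q^3;q^4)_\infty$. On the right-hand side the decisive simplification is $bz\mapsto(a/(bq))(-bq/a)=-1$, so the leading term $1-bz$ collapses to $2$; equating and rearranging the constants yields
$$\frac{(a^2q,b^2q;q^4)_\infty}{(a^2q^3,b^2q^3;q^4)_\infty}=1+ab+\mathbf{K},$$
where $\mathbf{K}$ is the specialized continued fraction. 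It then remains to check that $\mathbf{K}$ is the continued fraction on the right of \eqref{cframentry12eq2}, which is a direct evaluation of the successive entries: for example $(c-abz)(z-1)\mapsto a(b+aq)\cdot(-(a+bq)/a)=-(a+bq)(b+aq)$ with denominator $1-bzq^2\mapsto 1+q^2$, and $(1-bq^2)(cq^2z-az)\mapsto q^2(a-bq)(b-aq)$ with denominator $1-bzq^4\mapsto 1+q^4$, and so on. The alternating signs in \eqref{cframentry12eq2} are exactly the signs carried by these specialized numerators.

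The one point requiring care is that the stated hypotheses $|q|<1$ and $|bq|<|a|<1/|b|$ are precisely what the argument needs. The base becomes $q^2$, so $|q|<1$ covers that; Theorem \ref{th2.4} also requires $|z|<1$ and $|c/b|<1$, which after specialization read $|bq/a|<1$ and $|b^2q|<1$, the first being $|bq|<|a|$ and the second following from the chain $|bq|<|a|<1/|b|$ by transitivity. Finally the lemma's hypothesis $|b|>1$ becomes, after $b\mapsto a/(bq)$, the same inequality $|bq|<|a|$, so all three analytic constraints collapse to the single displayed condition. I expect this bookkeeping --- tracking which occurrences of $q$ are original and which have been doubled, and confirming that the separate convergence requirements reduce to one clean chain of inequalities --- to be the main obstacle, the continued-fraction algebra itself being entirely routine.
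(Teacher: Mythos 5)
Your proposal is correct and takes essentially the same route as the paper's proof: specialize the $(1-c)$-multiplied form of \eqref{hcf2} via $q\to q^2$, $a\to a^2q$, $b\to a/(bq)$, $c\to ab$, $z\to -bq/a$, and evaluate the left side through \eqref{ramprcfeq} and the lemma \eqref{2phi1prlem}. Your entry-by-entry verification and the reduction of the analytic hypotheses to $|q|<1$ and $|bq|<|a|<1/|b|$ correctly fill in the details that the paper's very terse proof leaves implicit.
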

\begin{proof}
The proof is similar to the proof above.
This time replace $q$ with $q^2$ in \eqref{hcf2}, multiply both sides by $1-c$ and then replace $a$ with $a^2q$, $b$ with $a/(bq)$, $c$ with $ab$ and $z$ with $-bq/a$ so that the left side of \eqref{hcf2} becomes the left side of \eqref{ramprcfeq}, and thus equals the right side of \eqref{ramprcfeq}. The same changes on the right side of \eqref{hcf2} leads to the right side of \eqref{cframentry12eq2}.
\end{proof}
Remark: The restrictions on $a$ and $b$ in the corollary are, as above, to ensure that the requirements of  \eqref{ramprcfeq} and \eqref{hcf2} are met, but it may be the case that these restrictions may be relaxed and \eqref{cframentry12eq2} will still hold.

We now consider the second of Ramanujan's continued fraction identities which were alluded to above.
This identity of Ramanujan \cite[Second Notebook, Chapter 16, Entry 12]{lnb} may be proved by employing Heine's continued fraction identity, as the authors in \cite{ABBW85} did. The proof is short, so we give it for the sake of completeness.
\begin{corollary}
If $|q|,\, |a|<1$, then
\begin{multline}\label{ramcort1eq}
\frac{(-a,b;q)_{\infty} - (a,-b;q)_{\infty}}{(-a,b;q)_{\infty}+ (a,-b;q)_{\infty}}
= \frac{a-b}{1-q}
\-
\frac{(a-bq)(b-aq)}{1-q^3}\\
\-
\frac{(a-bq^2)(b-aq^2)q}{1-q^5}
\-
\frac{(a-bq^3)(b-aq^3)q}{1-q^7}
\-
\cds
.
\end{multline}
\end{corollary}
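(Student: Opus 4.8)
The plan is to follow the route flagged in the text and obtain \eqref{ramcort1eq} as a specialization of Heine's continued fraction \eqref{hcf}. Writing $X=(-a,b;q)_\infty$ and $Y=(a,-b;q)_\infty$, I would first note that the partial denominators on the right of \eqref{ramcort1eq} run through the odd powers $1-q,\,1-q^3,\,1-q^5,\dots$, which forces the base of Heine's continued fraction to be $q^2$. Accordingly I would begin by replacing $q$ with $q^2$ in \eqref{hcf} and then look for values of the Heine parameters $a,b,c,z$, expressed through the $a,b$ of \eqref{ramcort1eq}, for which the continued fraction on the right of \eqref{hcf} reproduces, term by term, the right side of \eqref{ramcort1eq}.

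Matching the partial quotients for the later indices points to the choice $c=1/q$, since then the denominators $1-cq^{2n}=1-q^{2n-1}$ are exactly those required, together with a choice of the Heine $a,b$ built from the ratios $b/a$ and $b/(aq)$, which produces the factors $(a-bq^{k})(b-aq^{k})$ in the partial numerators. The leading partial quotient, and the constant $1$ standing in front of the continued fraction in \eqref{hcf}, do not line up with \eqref{ramcort1eq} directly, so I would absorb that discrepancy by an equivalence transformation, or equivalently by peeling off the first approximant and re-expressing the tail; the elementary identity $1+\frac{X-Y}{X+Y}=\frac{2X}{X+Y}$ records precisely the shape the left side of \eqref{hcf} must then take.

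The substantive step is the evaluation of the left side of \eqref{hcf}: under the above specialization the ratio of the two ${}_2\phi_1$ series must collapse to $\frac{2X}{X+Y}$, so that subtracting $1$ leaves exactly $\frac{X-Y}{X+Y}$. This is where the Bailey-Daum identity \eqref{bdeq} is decisive. As in the proof of the preceding Lemma, neither ${}_2\phi_1$ is individually summable with the parameters forced on us, so I would evaluate one of them by Bailey-Daum and reduce the remaining series, after elementary algebra and a shift of the summation index, to a second Bailey-Daum sum; splitting the resulting $(\,\cdot\,;q^2)_\infty$ products back into $(\,\cdot\,;q)_\infty$ products through $(x;q)_\infty=(x;q^2)_\infty(xq;q^2)_\infty$ and a theta-type rearrangement should recombine them into the sum and difference $X\pm Y$.

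The main obstacle I anticipate is exactly this product evaluation: coaxing the sum-and-difference combination $(-a,b;q)_\infty\pm(a,-b;q)_\infty$ out of Bailey-Daum, since a single Bailey-Daum sum yields one infinite product whereas $X\pm Y$ are each sums of two products. A second delicate point is making the powers of $q$ in the partial numerators agree with \eqref{ramcort1eq} exactly rather than up to an index-dependent factor; reconciling this may require passing through a contraction of the Heine continued fraction instead of a bare parameter substitution. Finally, in keeping with the emphasis of the paper, I would verify that the continued fraction genuinely converges to $\frac{X-Y}{X+Y}$ throughout the stated region $|q|,|a|<1$, invoking the known convergence of Heine's continued fraction and, if a transformation is used, the fact that it preserves the limit.
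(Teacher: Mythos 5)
Your overall strategy --- specializing Heine's continued fraction \eqref{hcf} after replacing $q$ by $q^2$, inverting, and recognizing that the left side must reduce to $(X-Y)/(X+Y)$ --- is indeed the paper's route, but there is a genuine gap at exactly the point you flag as your ``main obstacle'': the evaluation of the ratio of ${}_2\phi_1$ series. The Bailey--Daum identity \eqref{bdeq} is the wrong tool here; it is what drives the \emph{other} Ramanujan identity \eqref{cframentry12} via the Lemma preceding this corollary, and, as you observe yourself, a Bailey--Daum evaluation yields a single infinite product, whereas each of $(-a,b;q)_{\infty}\pm(a,-b;q)_{\infty}$ is a \emph{sum} of two products; no rearrangement of $(\,\cdot\,;q^2)_{\infty}$ factors will manufacture that sum. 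Your plan therefore stalls precisely where the real work lies.

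The missing idea is an even/odd decomposition combined with the $q$-binomial theorem. The substitution actually needed is: replace $q$ by $q^2$, then $a\mapsto bq/a$, $b\mapsto b/a$, $c\mapsto q$ (not $c=1/q$: the first partial quotient of \eqref{hcf} carries the denominator $(1-c)(1-cq^2)$, which must supply the factor $(1-q)(1-q^3)$) and $z\mapsto a^2$. After inverting and multiplying by $a-b$, the left side becomes $B/A$ with $A={}_2\phi_1(bq/a,b/a;q;q^2;a^2)$ and $B=\frac{a-b}{1-q}\,{}_2\phi_1(bq/a,bq^2/a;q^3;q^2;a^2)$, and the whole point is that $A$ and $B$ are, respectively, the even-index and odd-index parts of the single series $\sum_{n\geq 0}\frac{(b/a;q)_n}{(q;q)_n}a^n$. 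Consequently $A\pm B=\sum_{n\geq 0}\frac{(b/a;q)_n}{(q;q)_n}(\pm a)^n=\frac{(\pm b;q)_{\infty}}{(\pm a;q)_{\infty}}$ by the $q$-binomial theorem (this is where $|a|<1$ enters, and it also guarantees $|z|=|a|^2<1$ so that \eqref{hcf} applies). Clearing the denominators $(a;q)_{\infty}(-a;q)_{\infty}$ in $\frac{(A+B)-(A-B)}{(A+B)+(A-B)}=\frac{B}{A}$ then produces exactly the left side of \eqref{ramcort1eq}. Without this observation the product evaluation you correctly anticipate as problematic does not go through, and the contraction of the continued fraction you contemplate is not needed.
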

\begin{proof}
In \eqref{hcf}, replace $q$ with $q^2$ and then replace $a$ with $bq/a$, $b$ with $b/a$, $c$ with $q$ and $z$ with $a^2$.
Then invert both sides and multiply both sides by $a-b$, so that the continued fraction on the right side of the resulting identity is the continued fraction on the right side of \eqref{ramcort1eq}.

The left side of the resulting identity is $B/A$, where
\begin{align}\label{BoverAeq}
A=\, _2\phi_1(bq/a,b/a;q;q^2;a^2),&&B=\frac{a-b}{1-q}\, _2\phi_1(bq/a,bq^2/a;q^3;q^2;a^2).
\end{align}
It is an easy check that
$$
A\pm B =\sum_{n=0}^{\infty}\frac{(b/a;q)_n}{(q;q)_n}(\pm a)^n= \frac{(\pm b;q)_{\infty}}{(\pm a;q)_{\infty}},
$$
where the last equality follows from the $q$-binomial theorem. Hence the left side of \eqref{ramcort1eq} is
$$
\frac{(b;q)_{\infty}/(a;q)_{\infty}-(-b;q)_{\infty}/(-a;q)_{\infty}}
{(b;q)_{\infty}/(a;q)_{\infty}+(-b;q)_{\infty}/(-a;q)_{\infty}}
=
\frac{(A+B)-(A-B)}{(A+B)+(A-B)}
=
\frac{B}{A},
$$
and
\eqref{ramcort1eq} follows.
\end{proof}
As with the infinite product in \eqref{cframentry12}, it is now an easy matter to derive two other continued fraction expansions for the combinations of infinite products in \eqref{ramcort1eq}.

\begin{corollary}
Let $|q|<1$. (i) If $|a^2|, \, |ab/q|<1$, then
\begin{multline}\label{ramcort1eq2}
\frac{(-a,b;q)_{\infty} - (a,-b;q)_{\infty}}{(-a,b;q)_{\infty}+ (a,-b;q)_{\infty}}
= \frac{(a-b)q}{(ab+q)(1-q)}
\-
\frac{(a-bq^2)(b-aq^2)}{(ab+q)(1-q^3)}\\
\-
\frac{(a-bq^4)(b-aq^4)q}{(ab+q)(1-q^5)}
\-
\frac{(a-bq^6)(b-aq^6)q}{(ab+q)(1-q^7)}
\-
\cds
.
\end{multline}
(ii) If $|a^2|, \, |aq/b|<1$, then
\begin{multline}\label{ramcort1eq3}
\frac{(-a,b;q)_{\infty} - (a,-b;q)_{\infty}}{(-a,b;q)_{\infty}+ (a,-b;q)_{\infty}}
= \frac{(a-b)}{1-a b}
\-
\frac{(1-a^2)(1-b^2)q}{1-a b q^2}\\
\-
\frac{(a-bq^2)(b-aq^2)q}{1-a b q^4}
\-
\frac{(1-a^2q^2)(1-b^2q^2)q^3}{1-a b q^6}
\-
\frac{(a-bq^4)(b-aq^4)q^3}{1-a b q^8}
\-
\cds
.
\end{multline}
\end{corollary}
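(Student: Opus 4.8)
The plan is to imitate the proof of \eqref{ramcort1eq}, replacing Heine's continued fraction by the two other expansions of the same ratio ${}_2\phi_1(a,b;c;q;z)/{}_2\phi_1(a,bq;cq;q;z)$ that were established earlier, namely \eqref{hcf2} and the continued fraction \eqref{2phi1eq31}. The decisive observation, already isolated in the proof of \eqref{ramcort1eq}, is that under the substitution $q\mapsto q^2$ followed by $a\mapsto bq/a$, $b\mapsto b/a$, $c\mapsto q$, $z\mapsto a^2$, the ratio ${}_2\phi_1(a,b;c;q;z)/{}_2\phi_1(a,bq;cq;q;z)$ collapses to $(a-b)A/((1-q)B)$, where $A$ and $B$ are the ${}_2\phi_1$ expressions in \eqref{BoverAeq}, while the combination of infinite products on the left of \eqref{ramcort1eq2} and \eqref{ramcort1eq3} is exactly $B/A$. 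Since \eqref{hcf2} and \eqref{2phi1eq31} represent the identical ratio, applying this same substitution to each, then inverting and rescaling so as to recover $B/A$ on the left, will turn their right-hand sides into two new continued fractions for the same combination of products.

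For part (i) I would start from \eqref{2phi1eq31} in Theorem \ref{2phi1cfthm3}(i). Its left side already carries the factor $(1-c)$, so after the substitution the left side is $(a-b)A/B$; inverting and multiplying by $a-b$ then yields $B/A$ exactly. The hypothesis $|z|<1$ becomes $|a^2|<1$ and the hypothesis $|az/q|<1$ becomes $|ab/q|<1$, which are precisely the conditions stated in the corollary. A short computation shows that every partial denominator $(1-cq^{k})+(1-bq^{k+1}/a)\,az/q$ of \eqref{2phi1eq31} collapses under the substitution to $(1-q^{2k+1})(ab+q)/q$; this is the origin of the common factor $(ab+q)$ appearing in each denominator of \eqref{ramcort1eq2}, and the leading block reproduces the term $(a-b)q/((ab+q)(1-q))$. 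For part (ii) I would instead take \eqref{hcf2} from Theorem \ref{th2.4}, first multiplying through by $1-c$ to match the normalization used in part (i), and then make the same substitution, invert, and multiply by $a-b$. Here the hypotheses $|z|,|c/b|<1$ become $|a^2|<1$ and $|aq/b|<1$, again matching the corollary. The leading term $(1-bz)/(1-c)$ becomes simply $1-ab$ after the multiplication by $1-c$, supplying the first partial denominator of \eqref{ramcort1eq3}, and the remaining partial numerators become the alternating blocks $(1-a^2q^{2k})(1-b^2q^{2k})$ and $(a-bq^{2k})(b-aq^{2k})$ with denominators $1-abq^{2k}$.

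The only genuine labor is the equivalence transformation needed to put each substituted continued fraction into the precise stated normal form. The substitutions introduce spurious factors -- powers of $1-q$ coming from the $(1-c)$ denominators in \eqref{hcf2}, and the uniform factor $1/q$ inside the $(ab+q)/q$ denominators coming from \eqref{2phi1eq31} -- and in the case of \eqref{hcf2} the plus signs of Theorem \ref{th2.4} must also be converted to the minus signs of \eqref{ramcort1eq3}, the requisite sign being already visible in the factor $(z-1)=-(1-a^2)$. All of these are cleared by choosing suitable nonzero multipliers $r_n$ and replacing the partial numerators and denominators by $r_nr_{n-1}a_n$ and $r_nb_n$; since an equivalence transformation preserves the value of a continued fraction, this affects only the presentation and not the identity. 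This normalization, together with matching the powers of $q$ in the partial numerators, is the main (if entirely routine) obstacle; no new convergence question arises, as the stated restrictions on $a$ and $b$ are designed exactly to meet the hypotheses of Theorems \ref{th2.4} and \ref{2phi1cfthm3}.
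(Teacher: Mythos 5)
Your proposal is correct and is essentially identical to the paper's own proof: the paper likewise applies the substitution $q\mapsto q^2$, $a\mapsto bq/a$, $b\mapsto b/a$, $c\mapsto q$, $z\mapsto a^2$ from the proof of \eqref{ramcort1eq} to \eqref{2phi1eq31} and \eqref{hcf2} respectively, manipulates each left side into $B/A$ via \eqref{BoverAeq}, and leaves the equivalence-transformation bookkeeping to the reader. You in fact supply more of the omitted details (the collapse of the partial denominators to $(1-q^{2k+1})(ab+q)/q$, the origin of the parameter restrictions, and the sign conversion via $(z-1)=-(1-a^2)$) than the paper does.
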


\begin{proof}
Make same changes as in the proof of \eqref{ramcort1eq} are made in respectively, \eqref{2phi1eq31} and \eqref{hcf2} (replace $q$ with $q^2$ and then replace $a$ with $bq/a$, $b$ with $b/a$, $c$ with $q$ and $z$ with $a^2$). In each case the identity is manipulated so that the left side becomes $B/A$, as described at \eqref{BoverAeq} and thus equals the desired combination of infinite products. Upon making the same changes in the corresponding continued fractions in \eqref{2phi1eq31} and \eqref{hcf2}, the continued fractions in \eqref{ramcort1eq2} and \eqref{ramcort1eq3} are produced. The details are left to the reader.
\end{proof}
Remark: as above, the restrictions on $a$ and $b$ are derived from the restrictions on the parameters in \eqref{2phi1eq31} and \eqref{hcf2}, but it may be that one or both of the identities in the previous corollary may hold for values of $a$ and $b$ that lie outside the stated restrictions.

\section{Bauer--Muir transformations on the continued fractions from
quotients of $G(a,\l;b;q)$ }

A {\em Bauer--Muir transformation}  of a continued fraction $b_0 +
{\overset {\infty} {\underset{n=1} {K} }} (a_n /b_n )$  studied by G. Bauer \cite{bauer} and T. Muir
\cite{muir} is a (new) continued fraction whose approximants have
the values
\begin{equation}\label{b1}
S_k (w_k ) := b_0 + \df{a_1}{b_1}\+ \df{a_2}{b_2}\+\cds\+
\df{a_k}{b_k +w_k },\quad k=0,1,2,\dots.
\end{equation}

If
\begin{equation}\label{b2}
\lambda_n := a_n - w_{n-1}(b_n + w_n ) \neq 0, \quad n \geq 1,
\end{equation} then it is given by
\begin{equation}\label{b3}
S_k (w_k) = b_0 + w_0 + \df{\lambda_1}{b_1 + w_1}\+\df{a_1
\lambda_2 /\lambda_1} {b_2 +w_2 - w_0 \lambda_2 /
\lambda_1}\+\df{a_2 \lambda_3 / \lambda_2} {b_3 + w_3 - w_1
\lambda_3 /\lambda_2 }\+ \cds.
\end{equation}
We also call \eqref{b3} a {\em TW-transformation} of \eqref{cf}
with respect to $\{w_n \}$ \cite{lisaj3,lisaj2,lw,tw1}.

\noindent This transform can be used repeatedly, that is, we can
apply the Bauer--Muir transformation repeatedly to the new
continued fraction $\ti{b_i}+\ti{f^{(i)}}$ for the $(i+1)$th
iteration, $i=0,1,2,\dots$, if it satisfies the condition
\eqref{b2} at each step.

If we apply the Bauer--Muir transformation repeatedly to the
continued fraction in  \eqref{gcf1}, then we obtain the three
continued fractions in \eqref{gcf2}--\eqref{gcf4}. For
convenience, we take the reciprocals of the continued fractions in
\eqref{gcf1}--\eqref{gcf3}.

\begin{theorem} \label{gcf}
Suppose $a,b,$ and $\l $ do not vanish simultaneously and $q \neq
0, $ $\l \neq abq^n, -b/q^n$, $-a/q^{n-1},\ n=1, 2,3, \dots$, then
{\allowdisplaybreaks
\begin{align}
&1+ \df{aq+\l q}{1}\+ \df{bq+\l q^2}{1}\+\df{aq^2 +\l q^3
}{1}\+ \df{bq^2 +\l q^4}{1}\+ \cds \label{f1}\\
&= 1+aq+\df{\l q -abq^2 }{1+bq+aq^2 } \+ \df{\l q^2 -abq^4
}{1+bq^2 +aq^3 } \+ \df{\l q^3 -abq^6 }{1+bq^3 +aq^4 } \+\cds
 \label{f2}\\
&= 1-b+aq +\df{\l q+b}{1-b+aq^2}\+\df{\l q^2 +b}{1-b+aq^3}\+\df{\l
q^3 +b}{1-b+aq^4}\+\cds \label{f3}\\
&=1+\df{aq+\l q}{1-aq+bq}\+\df{aq+\l q^2}{1-aq+bq^2}\+\df{aq+\l
q^3}{1-aq+bq^3}\+\cds \label{f4}.
\end{align}
}
\end{theorem}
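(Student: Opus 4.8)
The plan is to establish the three equalities in the theorem one at a time, each by exhibiting one continued fraction as a Bauer--Muir transformation of another, and then to close every step with a single convergence principle applied uniformly. Concretely, for each adjacent pair among \eqref{f1}--\eqref{f4} I would write down an explicit modifying sequence $\{w_n\}$, compute the quantities $\lambda_n=a_n-w_{n-1}(b_n+w_n)$ from \eqref{b2}, and substitute them into the transformation formula \eqref{b3}; after a normalizing equivalence transformation of the partial denominators (which alters neither the values of the approximants nor convergence) this should reproduce the target continued fraction term by term. A natural route is to connect consecutive members along a chain such as \eqref{f1}$\,\to\,$\eqref{f4}$\,\to\,$\eqref{f2}$\,\to\,$\eqref{f3}, with equality of all four following by transitivity; any pairing in which consecutive members are linked by a single transformation will serve. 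The work then splits cleanly into an algebraic half (the transformed continued fraction has the stated partial numerators and denominators) and an analytic half (the transform converges to the same limit).

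The algebraic half is bookkeeping. The value of $b_0+w_0$ pins down $w_0$; matching the first partial numerator and denominator, up to the freedom of an equivalence factor, pins down $w_1$; and at each subsequent level the requirement that $a_{n-1}\lambda_n/\lambda_{n-1}$ and $b_n+w_n-w_{n-2}\lambda_n/\lambda_{n-1}$ agree with the corresponding entries of the target determines $w_n$ recursively. The one point needing genuine care is the nonvanishing condition \eqref{b2}, and this is exactly where the hypotheses are used: the partial numerators of the four continued fractions factor as $q^{m}(a+\l q^{m-1})$, $q^{m}(b+\l q^{m})$ and $q^{n}(\l-abq^{n})$, so the standing assumptions $\l\neq -a/q^{n-1},\,-b/q^{n},\,abq^{n}$ are precisely what guarantee that no $\lambda_n$, and no partial numerator of any of the four continued fractions, vanishes.

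For the analytic half I would isolate the following observation as a lemma. If $A_n/B_n$ are the canonical approximants of a continued fraction $C$ and $T$ is its Bauer--Muir transform with respect to $\{w_n\}$, then by construction the approximants of $T$ are the modified approximants
\begin{equation*}
S_n(w_n)=\frac{A_n+w_nA_{n-1}}{B_n+w_nB_{n-1}}.
\end{equation*}
The decisive structural input, and the reason the argument is clean here, is that the canonical numerators and denominators converge \emph{separately}: $A_n\to A_\infty$ and $B_n\to B_\infty$ with $B_\infty\neq0$. This can be read off from the three--term recurrence $X_n=b_nX_{n-1}+a_nX_{n-2}$ satisfied by the numerators and denominators, and it is here that $|q|<1$ enters. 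Provided the modifying sequence satisfies $w_n\to w_\infty\neq-1$, letting $n\to\infty$ then gives
\begin{equation*}
S_n(w_n)\longrightarrow\frac{A_\infty(1+w_\infty)}{B_\infty(1+w_\infty)}=\frac{A_\infty}{B_\infty}=f,
\end{equation*}
so that $T$ converges to the same limit $f$ as $C$, and the normalizing equivalence transformation preserves this limit.

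I expect the main obstacle to lie entirely in this last step, namely in excluding convergence to a wrong tail. The identity for $S_n(w_n)$ holds unconditionally, but in general $S_n(w_n)$ and $A_n/B_n$ need not share a limit, and controlling the difference
\begin{equation*}
S_n(w_n)-\frac{A_n}{B_n}=\frac{w_n(-1)^n\,a_1a_2\cdots a_n}{B_n\left(B_n+w_nB_{n-1}\right)}
\end{equation*}
requires a lower bound on the denominators that is delicate without extra structure. Here the separate convergence of $A_n$ and $B_n$ to finite limits with $B_\infty\neq0$ removes the difficulty at a stroke; the only residual checks are the mild nondegeneracies $w_\infty\neq-1$ and $B_\infty\neq0$, which I would verify explicitly for each of the three modifying sequences produced in the first step.
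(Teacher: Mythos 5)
Your analytic lemma (separate convergence of the numerators $A_n$ and denominators $B_n$, together with $w_n\to w_\infty\neq -1$, forces the Bauer--Muir transform to converge to the same limit) is exactly the mechanism used in the paper, and you have correctly located where the hypotheses $\l\neq abq^n,\,-b/q^n,\,-a/q^{n-1}$ enter via the nonvanishing condition \eqref{b2}. The gap is in the architecture of your algebraic half: none of \eqref{f2}, \eqref{f3}, \eqref{f4} is a \emph{single} Bauer--Muir transform of a neighbour with respect to a tractable modifying sequence. Test this on \eqref{f1}$\to$\eqref{f2}: matching $b_0+w_0$ forces $w_0=aq$, and matching the first partial numerator $\lambda_1=\l q-aqw_1$ with $\l q-abq^2$ forces $w_1=bq$, whereupon \eqref{b3} delivers the first partial denominator $1+bq$ rather than the required $1+bq+aq^2$; the discrepancy $aq^2$ is precisely the head of the \emph{next} tail and cannot be absorbed by an equivalence transformation (the first approximants $(1+aq+\l q+bq)/(1+bq)$ and $(1+aq+\l q+bq+aq^2+a^2q^3)/(1+bq+aq^2)$ are simply different numbers). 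If instead you let your recursive matching scheme determine $w_n$ by forcing $S_n(w_n)=C_n/D_n$ at every level, a solution generically exists, but it is a rational function of the parameters with no visible closed form, and the statement ``CF${}_2$ is a Bauer--Muir transform of CF${}_1$ up to equivalence'' becomes essentially vacuous: all of the content is then in proving $w_n\to w_\infty\neq-1$, for which you have no handle.

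What the paper actually does is apply the Bauer--Muir transformation with the simple sequence $w_{2n}=aq^{n+1}$, $w_{2n+1}=bq^{n+1}$, which corrects only the leading partial quotient and leaves a tail that is again of the form \eqref{f1} with $a,b,\l$ replaced by $aq,bq,\l q$; it then iterates on the successive tails \emph{infinitely often}, and the target \eqref{f2} emerges only in the limit of infinitely many transformations. This introduces a second layer of convergence analysis that your proposal omits entirely: one must show that the tail values $f_n=C^{(n)}/D^{(n)}$ produced after $n$ transformations converge to a limit $f\neq-1$ (here $f=0$), so that $\lim_n (C_n+f_nC_{n-1})/(D_n+f_nD_{n-1})=C/D$. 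Your single-transformation lemma, correct as it is, does not by itself justify this passage to the limit in the number of transformations. A secondary, smaller issue: separate convergence of $A_n$ and $B_n$ cannot simply be ``read off'' from the three-term recurrence; the paper imports it from Hirschhorn's explicit limit formulas and from \cite{BMcLW}, and for \eqref{f3} and \eqref{f4} it holds only under the additional restrictions $|b|<1$ and $|aq|<1$ noted in the remark after the theorem.
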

Remark: Equality with the third continued fraction \eqref{f3} needs $|b|<1$ and equality with the fourth continued fraction \eqref{f4} needs $|aq|<1$.
\begin{proof}
Let
\begin{align*}
G(q):=1+ \df{aq+\l q}{1}\+ \df{bq+\l q^2}{1}\+\df{aq^2 +\l q^3
}{1}\+ \df{bq^2 +\l q^4}{1}\+ \cds.
\end{align*}

First if we choose $w_{2n}=aq^{n+1},\ w_{2n+1}=bq^{n+1},\ n \geq
0, $ as modifying factors, then
 $\l_{2n-1}=\l q^{2n-1}-ab q^{2n} \neq 0$ and
$\l_{2n}=\l q^{2n}-ab q^{2n+1} \neq 0$ for $n \geq 1, $ so the
Bauer--Muir transformation exists, and by \eqref{b3},

\begin{align*}
G(q)&=1+aq+\df{\l q -abq^2 }{1+bq} \+\df{aq^2 +\l q^2 }{1}\+
\df{bq^2 +\l q^3}{1}\+ \df{aq^3 +\l q^4 }{1}\+ \cds.
\end{align*}

Let
$$G^{(1)} (q):=1+ bq+ \df{aq^2 +\l q^2 }{1}\+ \df{bq^2 +\l q^3 }{1}\+\df{aq^3 +\l
q^4 }{1}\+ \cds.$$

\noindent To apply Bauer--Muir transformation to $G^{(1)} (q), $
choose $w_{2n}=aq^{n+2},\ w_{2n+1}=bq^{n+2}$,\\
$\ n \geq 0.$ Since
 $\l_{2n-1}=\l q^{2n}-ab q^{2n+2} \neq 0$ and
$\l_{2n}=\l q^{2n+1}-ab q^{2n+3} \neq 0$ for $n \geq 1$, we have a
Bauer--Muir transformation to obtain
$$G^{(1)} (q)=1+bq+aq^2 + \df{\l q^2 -abq^4 }{1+bq^2} \+\df{aq^3 +\l q^3 }{1}\+ \df{bq^3 +\l q^4}{1}\+
\df{aq^4 +\l q^5 }{1}\+ \cds.$$

\noindent Hence we have
\begin{align*}
G(q) &= 1+aq+\df{\l q -abq^2 }{1+bq} \+\df{aq^2 +\l q^2 }{1}\+
\df{bq^2 +\l q^3}{1}\+ \df{aq^3 +\l q^4 }{1}\+ \cds \\
&= 1 + aq + \f{\lambda q - ab q^2}{G^{(1)}(q)} \\
&= 1 + aq + \f{\lambda q - ab q^2}{1+bq + aq^2 } \+ \f{\lambda q^2
- abq^4}{1+bq^2} \+ \f{aq^3 +\lambda q^3}{1}\+ \f{bq^3 +\lambda
q^4}{1} \+ \cds.
\end{align*}

\noindent Let's repeat this process by defining for $i \ge 2, $
\begin{equation}\label{tail11}
G^{(i)} (q):=1+ bq^i + \df{aq^{i+1} +\l q^{i+1} }{1}\+ \df{bq^{i+1} +\l q^{i+2} }{1}\+\df{aq^{i+2} +\l
q^{i+3} }{1}\+ \cds
\end{equation}
and choose $w^{(i)}_{2n}=aq^{(n+1)+i},\
w^{(i)}_{2n+1}=bq^{(n+1)+i},\ n \geq 0.$ Then
 $\l^{(i)}_{2n-1}=\l q^{(2n-1)+i}-ab q^{2n+2i} \neq 0$ and
$\l^{(i)}_{2n}=\l q^{2n+i}-ab q^{(2n+1)+2i} \neq 0$ for $n \geq 1,
$ so that it leads to the Bauer--Muir transformation to obtain
$$ G^{(i)} (q)=1+bq^i +aq^{i+1}+\df{\l q^{i+1} -abq^{2i+2} }{1+bq^{i+1} } \+\df{a q^{i+2} +\l q^{i+2} }{1}\+
\df{b q^{i+2} +\l q^{i+3}}{1 }\+ \cds.$$

\noindent Therefore, after letting $i \to \infty, $  we deduce
that
\begin{align*}
G(q)=1+aq+\df{\l q -abq^2 }{1+bq+aq^2 } \+ \df{\l q^2 -abq^4
}{1+bq^2 +aq^3 } \+ \df{\l q^3 -abq^6 }{1+bq^3 +aq^4 } \+\cds,
\end{align*}
which gives the continued fraction in \eqref{f2}.

Next, we derive continued fraction in \eqref{f3} by applying
Bauer--Muir transformation to the continued fraction in
\eqref{f2}. If we let $w_n = -bq^n ,\ n \geq 0$, then with $\l_n =
\l q^n +bq^{n-1} \neq 0, $ the Bauer--Muir transformation of the
continued fraction in \eqref{f2} that we call as $H(q) $ is
$$ H(q)=1-b+aq+\df{\l q +b }{1+aq^2 } \+ \df{\l q^2 -abq^3 }{1+bq +aq^3 } \+
\df{\l q^3 -abq^5 }{1+bq^2 +aq^4 } \+\df{\l q^4 -abq^7 }{1+bq^3
+aq^5 } \+\cds.$$

Similarly if we let, for $i \geq 1$,
$$H^{(i)} (q) =1+aq^{i+1}+\df{\l
q^{i+1} -abq^{i+2} }{1+bq +aq^{i+2} } \+ \df{\l q^{i+2} -abq^{i+4}
}{1+bq^{2} +aq^{i+3} } \+ \df{\l q^{i+3} -abq^{i+6} }{1+bq^{3}
+aq^{i+4} } \+\cds$$ then with $w^{(i)}_n =-bq^{n},\ n \geq 0, $
and $\l_n=\l q^{n+i}+b q^{n-1} \neq 0$, we have the Bauer--Muir
transformation to obtain
$$H^{(i)} (q)
=1-b+aq^{i+1}+\df{\l q^{i+1} +b }{1 +aq^{i+2} } \+ \df{\l q^{i+2}
-abq^{i+3} }{1+bq +aq^{i+3} } \+ \df{\l q^{i+3} -abq^{i+5}
}{1+bq^{2} +aq^{i+4} } \+\cds.$$ By the same procedure as before
\begin{align*}
H(q)&= 1+aq+\df{\l q -abq^2 }{1+bq+aq^2 } \+ \df{\l q^2 -abq^4
}{1+bq^2 +aq^3 } \+ \df{\l q^3 -abq^6 }{1+bq^3 +aq^4 } \+\cds \\
&=1-b+aq+\df{\l q +b }{1+aq^2 } \+ \df{\l q^2 -abq^3 }{1+bq +aq^3
} \+ \df{\l q^3 -abq^5 }{1+bq^2 +aq^4 } \+\df{\l q^4 -abq^7
}{1+bq^3 +aq^5 } \+\cds \\
&= 1-b+aq + \df{\l q +b}{1-b+aq^2} \+ \df{\l q^2 +b}{1+ a q^3} \+
\df{\l q^3 - ab q^4}{1+bq + aq^4} \+ \df{\l q^4 - ab q^6}{1+bq^2 +
aq^5}\+\cds.
\end{align*}
\noindent Continue this process to obtain the continued fraction
in \eqref{f3}.

To have the continued fraction in \eqref{f4}, apply Bauer-Muir
transformation repeatedly to the continued fraction in \eqref{f3}.
Choose $w_n =b-aq^{n+1},\ n \geq 0, $ then $\l_n = (\l +a)q \neq
0, $ so the Bauer--Muir transformation of the continued fraction
in \eqref{f3} is
$$ 1+\df{aq+\l q}{1}\+\df{\l q^2+bq}{(1-bq)+aq^2}\+\df{\l q^3
+bq}{(1-bq)+aq^3}\+\cds.$$

Again for $i \geq 1$, let
$$K^{(i)} (q) =1+\df{\l q^{i+1}+bq^i}{1-bq^i+aq^2}\+\df{\l q^{i+2}
+bq^{i}}{1-bq^i+aq^{3}}\+\df{\l q^{i+3}
+bq^{i}}{1-bq^i+aq^{4}}\+\cds.$$

\noindent Then with $w^{(i)}_n=bq^i-aq^{n+1},\ n \geq 0,\
\l^{(i)}_n =\l q^{n+i}+aq^n \neq 0,\ n \geq 1, $ we have the
Bauer--Muir transformation
$$K^{(i)} (q) =1+bq^i-aq +\df{\l
q^{i+1}+aq}{1}\+\df{\l
q^{i+2}+bq^{i+1}}{1-bq^{i+1}+aq^{2}}\+\df{\l q^{i+3}
+bq^{i+1}}{1-bq^{i+1}+aq^{3}}\+\cds. $$

\noindent By following the same procedure as before, we obtain the
continued fraction in \eqref{f4}.

Finally, starting from the continued fraction in \eqref{f4}, we
can find the continued fraction expression in \eqref{f1}. If we
use the Bauer--Muir transformation repeatedly by choosing
$w_n^{(2i)}=0,\ w_n^{(2i+1)}=q^{(2i+1)}-q^{2(n+i+1)},\
n,i=0,1,2,\dots$, we obtain the continued fraction in \eqref{f1}.
\end{proof}

\subsection{Convergence of the continued fractions and their Bauer-Muir transformations to the same limits} It is not automatic that the Bauer-Muir transformation of a continued fraction with respect to a sequence $\{\omega_k\}$ converges, or if it does converge, that it converges to the same limit as the original continued fraction.

However, there is a certain class of continued fractions for which this is easily seen to be the case, provided the sequence $\{\omega_k\}$ is sufficiently well-behaved (for example, if the sequence has a limit different from $-1$). The class in question is the class of continued fractions for which the numerator- and denominator convergents converge separately, and fortunately for the present case, the continued fractions in question fall into that class.

Let the $n$-th approximant of the continued fraction $b_0+{\overset {\infty} {\underset{n=1} {K} }}(a_n/b_n)$ be denoted by $A_n/B_n$ and let the $n$-th approximant of its transformation with respect to the sequence $\{\omega_n\}$ be denoted by $C_n/D_n$. From \eqref{b1} it follows that for $n \geq 0$,
\begin{equation*}
C_n = A_n +\omega_n A_{n-1}, \qquad D_n = B_n +\omega_n B_{n-1}, \qquad
\Longrightarrow \frac{C_n}{D_n} = \frac{A_n +\omega_n A_{n-1}}{B_n +\omega_n B_{n-1}}.
\end{equation*}

Now suppose that the sequences $\{A_n\}$ and $\{B_n\}$ converge separately, i.e.,
\begin{equation*}
\lim_{n\to \infty}A_n = A, \qquad \lim_{n\to \infty}B_n = B, \qquad \Longrightarrow
\lim_{n\to \infty}\frac{A_n}{B_n} = \frac{A}{B},
\end{equation*}
for some complex numbers $A$ and $B$. Suppose further that $\lim_{n\to \infty}\omega_n = \omega \not = -1$, for some complex number $\omega$. Then
\begin{equation*}
\lim_{n\to \infty}\frac{C_n}{D_n} =
\lim_{n\to \infty}
\frac{A_n +\omega_n A_{n-1}}{B_n +\omega_n B_{n-1}}=
\frac{A+\omega A}{B+\omega B}=
\frac{A}{B}.
\end{equation*}

Thus the continued fraction $b_0+{\overset {\infty} {\underset{n=1} {K} }}(a_n/b_n)$  and  its Bauer-Muir transformation with respect to the sequence $\{\omega_n\}$ converge to the same limit.

Remark: The case $A=B=0$ but $\lim_{n\to \infty}A_n/B_n$ exists as a number in $\mathbb{C}$ (the latter of course will happen if the continued fraction converges to a number in $\mathbb{C}$) needs a little more care, but can still be dealt with (since $\lim_{n\to \infty}A_n/B_n$ exists, some tail of the sequence $\{B_n\}$ must have all terms non-zero).

As regards the numerators and denominators in the continued fractions in Theorem \ref{gcf} converging separately (employing the notation just above):\\
- Hirschhorn showed in \cite{H80} for \eqref{f1} that $A_n \to (-bq;q)_{\infty} G(a,b,\lambda)$ and that\\ $B_n \to (-bq;q)_{\infty} G(aq,b,\lambda q)$, as $n \to \infty$;\\
- that the numerators and denominators converge separately for the continued fraction in  \eqref{f2} follows from part (iii) in Theorem 3 of \cite{BMcLW};\\
- separate convergence for the continued fraction at \eqref{f3} (in the case $|b|<1$) follows from Hirschhorn's formulas for his $P_{\infty}$ and $Q_{\infty}$ in \cite{hirsh1};\\
- lastly, separate converge for the continued fraction at \eqref{f4} (in the case $|aq|<1$)
is a consequence of  part (iii) in Theorem 2 of \cite{BMcLW}.

To prove that two continued fractions are equal by applying \emph{infinitely} many Bauer-Muir transformations, a little more care is needed. Suppose it desired to show that
$$
b_0 +{\overset {\infty} {\underset{n=1} {K} }}\ \frac{a_n}{b_n}=d_0 +{\overset {\infty} {\underset{n=1} {K} }}\ \frac{c_n}{d_n},
$$
by applying infinitely many many Bauer-Muir transformations, as is the case in each of the continued fraction identities proved in Theorem \ref{gcf}.  Let $C_m/D_m$ denote the $m$-th approximant of  $d_0 +{\overset {\infty} {\underset{n=1} {K} }}(c_n/d_n)$ and suppose that $\lim_{m \to \infty}C_m=C$ and $\lim_{m \to \infty}D_m=D$.
Suppose further that, after the application of $n$ such transformations, it has been shown that
$$
b_0 +{\overset {\infty} {\underset{n=1} {K} }}\ \frac{a_n}{b_n}=d_0+\frac{c_1}{d_1}
\+
\frac{c_2}{d_2}
\+
\cds
\+
\frac{c_{n}}{d_{n}^{(n)}}
\+
\frac{c_{n+1}^{(n)}}{d_{n+1}^{(n)}}
\+
\frac{c_{n+2}^{(n)}}{d_{n+2}^{(n)}}
\+
\cds
.
$$
 Continue the transformation process by applying the next Bauer-Muir transformation with respect to the sequence
 $\{\omega_k^{(n)}\}$ to the tail of the above continued fraction that starts with $d_{n}^{(n)}$. For each $k \geq 1$ define
 $$
 \frac{C_k^{(n)}}{D_k^{(n)}}:=
 -d_n+d_{n}^{(n)}
 +
 \frac{c_{n+1}^{(n)}}{d_{n+1}^{(n)}}
\+
\frac{c_{n+2}^{(n)}}{d_{n+2}^{(n)}}
\+
\cds
\+
\frac{c_{n+k}^{(n)}}{d_{n+k}^{(n)}}.
 $$

Define
$$
 f_{n,k}:=
 -d_n+d_{n}^{(n)}
 +
 \frac{c_{n+1}^{(n)}}{d_{n+1}^{(n)}}
\+
\frac{c_{n+2}^{(n)}}{d_{n+2}^{(n)}}
\+
\cds
\+
\frac{c_{n+k}^{(n)}}{d_{n+k}^{(n)}+ \omega_k^{(n)}}
=\frac{C_k^{(n)}+ \omega_k^{(n)}C_{k-1}^{(n)}}{D_k^{(n)}+ \omega_k^{(n)}D_{k-1}^{(n)}}
 .
 $$

 Suppose that $\lim_{k \to \infty}C_k^{(n)}=C^{(n)}$, $\lim_{k \to \infty}D_k^{(n)}=D^{(n)}$ and $\lim_{k \to \infty}\omega_k^{(n)}=\omega^{(n)}\not = -1$, so that
 $$
f_n:= \lim_{k \to \infty}f_{n,k}
=\lim_{k \to \infty}
\frac{C_k^{(n)}+ \omega_k^{(n)}C_{k-1}^{(n)}}{D_k^{(n)}+ \omega_k^{(n)}D_{k-1}^{(n)}}
=\frac{C^{(n)}}{D^{(n)}}
$$
exists, and
$$
b_0 +{\overset {\infty} {\underset{n=1} {K} }}\ \frac{a_n}{b_n}=d_0+\frac{c_1}{d_1}
\+
\frac{c_2}{d_2}
\+
\cds
\+
\frac{c_{n}}{d_{n}+f_n}
=\frac{C_n+f_n C_{n-1}}{D_n+f_n D_{n-1}}
.
$$

If $\lim_{n \to \infty}f_n=f\not = -1$, then
$$
b_0 +{\overset {\infty} {\underset{n=1} {K} }}\ \frac{a_n}{b_n}=
\lim_{n\to \infty}
\frac{C_n+f_n C_{n-1}}{D_n+f_n D_{n-1}}
=
\frac{C}{D}=
d_0 +{\overset {\infty} {\underset{n=1} {K} }}\ \frac{c_n}{d_n},
$$
as was desired to be shown.

To see how this applies to Theorem \ref{gcf}, we examine the proof that the continued fraction at  \eqref{f1} equals the continued fraction \eqref{f2}.
By the remarks above, the numerator- and denominator convergents converge separately for each continued fraction.
From \eqref{tail11},
\begin{multline}\label{tail22}
-d_n+d_{n}^{(n)}
 +
 \frac{c_{n+1}^{(n)}}{d_{n+1}^{(n)}}
\+
\frac{c_{n+2}^{(n)}}{d_{n+2}^{(n)}}
\+
\cds \\
=- aq^n + \df{aq^{n+1} +\l q^{n+1} }{1}\+ \df{bq^{n+1} +\l q^{n+2} }{1}\+\df{aq^{n+2} +\l
q^{n+3} }{1}\+ \cds
.
\end{multline}
Apart from the initial term, this continued fraction is \eqref{f1} with $a$ replaced with $aq^n$, $b$ with $bq^n$ and $\l$ with $\l q^n$, hence the numerators and denominators also converge separately ($C^{(n)}$ and $D^{(n)}$ from above exist). From the remarks following \eqref{tail11}, $w^{(n)}_{2k}=aq^{(k+1)+n},\
w^{(n)}_{2k+1}=bq^{(k+1)+n},\ k \geq 0,$ and so $\lim_{k \to \infty}\omega_k^{(n)}=\omega^{(n)}=0$. Thus
$f_n = C^{(n)}/D^{(n)}$. From \eqref{tail22} and what has just been said,
$$
f=\lim_{n \to \infty}f_n= \lim_{n \to \infty}\frac{C^{(n)}}{D^{(n)}}=0,
$$
and thus the requirements for the continued fractions at \eqref{f1} and \eqref{f2} to be equal are met.

The arguments for the other pairs of continued fractions being equal are similar.

\section*{Acknowledgement}
The second author's research was partially supported by a grant from the Simons Foundation (\#209175 to James Mc Laughlin).

The third author's research was supported by Basic Science Research Program through the National Research Foundation of
Korea(NRF) funded by the Ministry of Education, Science and Technology (2011-0011257).

{\allowdisplaybreaks

}

\end{document}